\newcolumntype{L}{>{\displaystyle}l}
\newcolumntype{C}{>{\displaystyle}c}
\newcolumntype{R}{>{\displaystyle}r}
\newcommand{\R}{\ensuremath{\mathbb{R}}}
\newcommand{\Z}{\ensuremath{\mathbb{Z}}}
\newcommand{\CC}{\mathcal{C}}
\newcommand{\CF}{\ensuremath{\mathcal{F}}}
\newcommand{\CG}{\ensuremath{\mathcal{G}}}
\newcommand{\CO}{\ensuremath{\mathcal{O}}}
\newcommand{\CZ}{\ensuremath{\mathcal{Z}}}
\newcommand{\ov}{\overline}
\newcommand{\ga}{\gamma}
\newcommand{\G}{\Gamma}
\newcommand{\T}{\theta}
\newcommand{\al}{\alpha}
\newcommand{\be}{\beta}
\newcommand{\cl}{\mbox{\normalfont{Cl}}}
\newcommand{\s}{\ensuremath{\mathbb{S}}}
\newcommand{\C}{\ensuremath{\mathcal{C}}}
\newcommand{\D}{\ensuremath{\mathcal{D}}}
\newcommand{\de}{\delta}
\newcommand{\De}{\Delta}
\def\p{\partial}
\def\e{\varepsilon}
\newtheorem {theorem} {Theorem} 
\newtheorem {proposition} [theorem] {Proposition}
\newtheorem {corollary} [theorem] {Corollary}
\newtheorem {lemma} [theorem] {Lemma}
\newtheorem {example} {Example}
\newtheorem {claim} {Claim}
\newtheorem {mtheorem} {Theorem}
\begin{document}

\everymath{\displaystyle}
\allowdisplaybreaks
\title[]
{Persistence of periodic solutions for higher order perturbed differential systems via Lyapunov-Schmidt reduction}

\author{Murilo R. C\^{a}ndido$^{2}$}
\author{Jaume Llibre$^{2}$}
\author[D.D. Novaes]{Douglas D. Novaes$^{1}$}

\address{$^1$ Departamento de Matem\'{a}tica, Universidade
Estadual de Campinas, Rua S\'{e}rgio Buarque de Holanda, 651, Cidade Universit\'{a}ria Zeferino Vaz, 13083--859, Campinas, SP,
Brazil} \email{ddnovaes@ime.unicamp.br}
\address{$^{2}$ Departament de Matem\`{a}tiques, Universitat Aut\`{o}noma de
Barcelona, 08193 Bellaterra, Barcelona, Catalonia, Spain}
\email{candidomr@mat.uab.cat, jllibre@mat.uab.cat}
\subjclass[2010]{34C29, 34C25, 37G15}

\keywords{Lyapunov--Schmidt reduction, bifurcation theory, periodic solution, limit cycle, nonlinear differential system}

\maketitle

\begin{abstract}
In this work  we first provide sufficient conditions to assure the
persistence of some zeros of functions having the form
\begin{equation*}
g(z,\e)=g_0(z)+\sum_{i=1}^k \e^i g_i(z)+\CO(\e^{k+1}),
\end{equation*}
for $|\e|\neq0$ sufficiently small. Here $g_i:\D\rightarrow\R^n$,
for $i=0,1,\ldots,k$, are smooth functions being $\D\subset \R^n$ an
open bounded set. Then we use this result to  compute the
bifurcation functions which allow to study the periodic solutions of
the following $T$--periodic smooth differential system
\[
x'=F_0(t,x)+\sum_{i=1}^k \e^i F_i(t,x)+\CO(\e^{k+1}),
\quad (t,z)\in\s^1\times\D.
\]
It is assumed that the unperturbed differential system
has a sub-manifold of periodic solutions $\CZ$, $\dim(\CZ)\leq n$. We also study the case when the
 bifurcation functions have a continuum of zeros. Finally we provide the explicit
expressions of the bifurcation functions up to order 5.
\end{abstract}

\section{Introduction}

This work contains two main results. The first one (see Theorem \ref{LSt1}) provides sufficient conditions to assure the
persistence of some zeros of smooth functions $g:\R^n\times\R\rightarrow\R^n$  having the form
\begin{equation}\label{eq0}
g(z,\e)=g_0(z)+\sum_{i=1}^k \e^i g_i(z)+\CO(\e^{k+1}).
\end{equation}
The second one  (see Theorem \ref{PSt1}) provides sufficient conditions to assure the existence of periodic solutions
of the following differential system
\begin{equation}\label{ds0}
x'=F(t,z,\e)=F_0(t,x)+\sum_{i=1}^k \e^i F_i(t,x)+\CO(\e^{k+1}),
\quad (t,z)\in\s^1\times\D.
\end{equation}
Here $\s^1=\R/T$, for some $T>0$, and the assumption $t\in\s^1$ means that the system is $T$-periodic in the variable $t$. As usual $\delta_1(\e)=\CO\left(\delta_2(\e)\right)$ means that there exists a
constant $c_0>0$, which does not depends on $\e$, such that $|\delta_1(\e)|\leq c_0\, |\delta_2(\e)|$ for  $\e$ sufficiently small  (see \cite{SV}).

\smallskip

 It is assumed that either $g(z,0)$ vanishes in a submanifold of $\CZ\subset\mathcal{D},$ or that the unperturbed differential system $x'=F_0(t,x)$ has a submanifold $\CZ\subset\mathcal{D}$ of $T$-periodic solutions. In both cases $\dim(\CZ)\leq n.$ The second problem can be often reduced to the first problem, standing as its main motivation.

\smallskip

Regarding the first problem, assume that for some $z^*\in\CZ$,
$g(z^*,0)=0$.  We shall study the persistence of this zero for the function \eqref{eq0},
$g(x,\e),$ assuming that $|\e|\neq 0$ is sufficiently small. By persistence we mean the existence of continuous branches $\chi(\e)$ of simple zeros of $g(x,\e)$ (that is $g(\chi(\e),\e)=0$) such that $\chi(0)= z^*$. It is well known that
if the $n\times n$ matrix $\p_x g(z^*,0)$ (the Jacobian matrix of
the function $g$ with respect to the variable $x$ evaluated at
$x=z^*$) is nonsingular then, as a direct consequence of the
Implicit Function Theorem, there exists a unique smooth branch
$\chi(\e)$ of zeros of $g(x,\e)$ such
that $\chi(0)=x^*$. However if the matrix $\p_x g(x^*,0)$ is
singular (has non trivial kernel) we have to use the
Lyapunov--Schmidt reduction method to find branches
of zeros of $g$ (see, for instance, \cite{Ch}). Here we generalize some results from \cite{BFL,BGL1,LN}, providing a collection of functions $f_i$, $i=1,\ldots,k$, each one called {\it bifurcation function of order $i$}, which control the persistence of zeros contained in $\CZ$.

\smallskip

The second problem goes back to the works of Malkin \cite{Ma} and Roseau \cite{Ro}, whose have studied the persistence of periodic solutions for the differential system \eqref{ds0} with $k=1$. Let $x(t,z,\e)$ denote its solution such that $x(0,z,\e)=z$. In order to find initial conditions $z\in\mathcal{D}$ such that the solution $x(t,z,\e)$ is $T$-periodic we may consider the function $g(z,\e)=z-x(T,z,\e)$, and then try to use the results previously obtained from the first problem. Indeed, if $\CZ\subset\mathcal{D}$ is a submanifold of $T$-periodic solutions of the unperturbed system $x'=F_0(t,z)$ then $g(z,0)$ vanishes on $\CZ$. When $\dim(\CZ)=n$ this problem is studied at an arbitrary order of $\e$, see \cite{GGL,ddn}, even for nonsmooth systems. When $\dim(\CZ)<n$, this approach has already been used  in  \cite{BFL}, up to order 1, and in \cite{BGL1,BGL2}, up to order 2. In \cite{LN} this approach was used up to order 3 relaxing some hypotheses assumed in those previous 3 works. In \cite{GLWZ}, assuming the same hypotheses of \cite{BFL,BGL1,BGL2}, the authors studied this problem at an arbitrary order of $\e$. Here, following the ideas from \cite{ddn,LN}, we improve the results of \cite{GLWZ} relaxing some hypotheses and developing the method in a more general way.

\smallskip

In summary, in this paper we use the Lyapunov--Schmidt reduction
method for studying the zeros of functions like
\eqref{eq0} when the Implicit Function
Theorem cannot be directly applied. Another useful tool that we shall use to deal with this problem is the Browder
degree theory (see the Appendix B), which will allow us to provide estimates for these zeros. Then we apply these previous results for studying the periodic solutions of
differential systems like \eqref{ds0} through their bifurcation functions, provided by the higher order averaging theory.

\smallskip

This paper is organized as follows. In section \ref{mr} we state our main results: Theorem \ref{LSt1}, in subsection \ref{LSredu}, dealing with bifurcation of simple zeros of the equation $g(z,\e)=0$; and Theorem \ref{PSt1}, in subsection \ref{s12}, dealing with bifurcation of limit cycles of the differential equation $x'=F(t,z,\e)$. In sections \ref{proof1} and \ref{proof2} we prove Theorems \ref{LSt1} and \ref{PSt1}, respectively.  In section \ref{ApTB}, as an application of Theorem \ref{PSt1}, we study the birth of limit cycles in a 3D polynomial system. Finally, in section \ref{bf},  we study the case when the averaged functions have a continuum of zeros. In this last situation we also provide some results about the stability of the limit cycles.

\section{Statements of the main results}\label{mr}

Before we state our main results we need some preliminary concepts and
definitions. Given  $p$, $q$ and $L$   positive integers, $\ga_j=
(\ga_{j1},\ldots, \ga_{jp})\in \R^p$ for $j=1,\ldots,L$ and
$\ov{z}\in \R^p$. Let $G:\R^p\rightarrow\R^q$ be a sufficiently
smooth function, then the $L$-th Frechet derivative of  $G$ at $\ov
z$ is denoted by $\p^L G(\ov{z})$,  a symmetric $L$--multilinear
map, which applied to a ``product'' of $L$ $p$-dimensional vectors
denoted as $\bigodot_{j=1}^L\ga_j\in \R^{pL}$ gives
\[
\p^L G(\ov z)\bigodot_{j=1}^L\ga_j= \sum_{i_1,\ldots,i_L=1}^p \dfrac{\p^L
G(\ov z)}{\p b_{i_1}\cdots \p b_{i_L}}\ga_{1i_1}\cdots \ga_{Li_L}.
\]
The above expression is  indeed the G\^ateaux derivative
\begin{equation*}
\begin{aligned}
\p^L G(\ov z)\bigodot_{j=1}^L\ga_j&= \dfrac{\p}{\p\tau_1\p\tau_2\dots\p\tau_L}
G\left( \ov{z}+\tau_1\ga_1+\tau_2\ga_2+\dots+\tau_L\ga_L \right)
\Big|_{\tau_1=\dots=\tau_L=0} \\
&=\p\Big(\dots\p\big(\p G(\ov{z})\ga_1\big)\ga_2 \dots\Big)\ga_L.
\end{aligned}
\end{equation*}
We take $\p^0$ as the identity operator.

\subsection{The Lyapunov--Schmidt reduction method}\label{LSredu}

We consider the function
\begin{equation}\label{gz}
g(z,\e)=\sum_{i=0}^k \e^i g_i(z)+\CO(\e^{k+1}),
\end{equation}
where $g_i:\D\rightarrow\R^n$ is a $\C^{k+1}$ function, $k\geq 1$, for
$i=0,1,\ldots,k$, being $\D$ an open bounded subset of $\R^n$. For
$m < n$, let $V$ be an open bounded subset of $\R^m$ and
$\be:\cl(V)\rightarrow\R^{n-m}$ a $\C^{k+1}$ function, such that
\begin{equation}\label{Z}
\CZ=\{z_{\al}=(\al,\be(\al)):\,\al\in\cl(V)\}\subset\D.
\end{equation}
As usual  $\cl(V)$ denotes the closure of the set $V$.

As the main hypothesis we assume that
\begin{itemize}
\item[(H$_a$)] the function $g_0$ vanishes on the $m$--dimensional
submanifold $\CZ$ of $\D$.
\end{itemize}

\smallskip

Using the Lyapunov--Schmidt reduction method we shall develop the
bifurcation functions of order $i$, for $i=1,2,\ldots,k,$ which
control, for $|\e|\neq0$ small enough, the existence of branches of
zeros $z(\e)$ of \eqref{gz} bifurcating from $\CZ$, that is from
$z(0)\in\CZ$. With this purpose we introduce some notation. The
functions $\pi:\R^m\times\R^{n-m} \rightarrow\R^m$ and
$\pi^{\perp}:\R^m\times \R^{n-m} \rightarrow\R^{n-m}$  denote the
projections onto the first $m$ coordinates and onto the last $n-m$
coordinates, respectively. For a point $z\in \D$ we also consider
$z=(a,b)\in \R^m\times\R^{n-m}$.

\smallskip

For $i=1,2,\ldots,k$, we define the \textit{bifurcation functions}
$f_i:\cl(V)\rightarrow\R^m$ of order $i$
as\begin{equation}\label{fi} f_i(\al)=\pi
g_{i}(z_{\al})+\sum_{l=1}^i\sum_{S_l}\dfrac{1}{c_1!\,c_2!2!^{c_2}\cdots
c_l!l!^{c_l}}\p_b^L\pi
g_{i-l}(z_{\al})\bigodot_{j=1}^l\ga_j(\al)^{c_j},\quad\text{and}
\end{equation}
\begin{equation}\label{cF}
\CF^k(\al,\e)=\sum_{i=1}^k \e^i f_i(\al),
\end{equation}
where $\ga_i:V\rightarrow \R^{n-m}$, for $i=1,2,\ldots,k$, are
defined recurrently as
\begin{equation}\label{y}
\begin{array}{RL}
\ga_1(\al)=\!\!\!\!&-\Delta_{\al}^{-1}\pi^{\perp}g_1(z_\al)\quad\text{and}\vspace{0.3cm}\\
\ga_i(\al)=\!\!\!\!&-i!\Delta_{\al}^{-1}\Bigg(
\sum_{S'_i}\dfrac{1}{c_1!\,c_2!2!^{c_2}\cdots
c_{i-1}!(i-1)!^{c_{i-1}}}\p_b^{I'}\pi^{\perp} g_{0}(z_{\al})\bigodot_{j=1}^{i-1}
\ga_j(\al)^{c_j}\vspace{0.2cm}\\
&+ \sum_{l=1}^{i-1}\sum_{S_l}\dfrac{1}{c_1!\,c_2!2!^{c_2}\cdots
c_l!l!^{c_l}}\p_b^L\pi^{\perp}
g_{i-l}(z_{\al})\bigodot_{j=1}^l\ga_j(\al)^{c_j}\Bigg).
\end{array}
\end{equation}
Here $S_l$ is the set of all $l$-tuples of non--negative integers
$(c_1,c_2,\cdots,c_l)$ satisfying $c_1+2c_2+\cdots+lc_l=l$,
$L=c_1+c_2+\cdots+c_l$, and  $S'_i$ is the set of all
$(i-1)$-tuples of non--negative integers satisfying
$c_1+2c_2+\cdots+(i-1)c_{i-1}=i$, $I'=c_1+c_2+\cdots+c_{i-1}$ and
$\Delta_\al=\dfrac{\p\pi^{\perp} g_0}{\p b}(z_{\al})$.

\smallskip

We clarify that $S_0=S'_0=\emptyset,$ and when $c_j=0,$ for some
$j,$ then the term $\ga_j$ does not appear in the  ``product"
$\bigodot_{j=1}^l\ga_j(\al)^{c_j}$.

\smallskip

Recently in \cite{N} the Bell polynomials were used to provide an alternative formula for recurrences of kind \eqref{fi} and \eqref{y}. This new formula can make easier the computational implementation of the bifurcation functions \eqref{cF}.

\smallskip

The next theorem is the first main result of this paper. For sake of
simplicity, we take $f_0=0$.

\begin{mtheorem}\label{LSt1}
Let $\Delta_{\al}$ denote the lower right corner $(n-m)\times (n-m)$
matrix of the Jacobian matrix $D\,g_0(z_{\al})$. In additional to
hypothesis (H$_a$) we assume that
\begin{itemize}
\item[$(i)$] for each $\al \in\cl(V) $, $\det(\Delta_{\al})\neq0$;

\item[$(ii)$] for some $r\in\{1,\ldots,k\}$,  $f_1 = f_2 =\dots = f_{r-1}
= 0$ and $f_r$ is not identically zero;

\item[$(iii)$] there exists a small parameter $\e_0>0$  such that for each
$\e\in[-\e_0,\e_0]$ there exists $a_{\e}\in V$ satisfying
$\CF^k(a_{\e},\e )=0$;

\item[$(iv)$] there exist a constant $P_0>0$ and a positive integer  $l\leq
(k+r+1)/2$ such that
\begin{equation*}
\left|\p_\al\CF^k(a_\e,\e)\cdot \al\right|\geq P_0|\e|^l |\al|, \quad
\text{for} \quad \al\in V.
\end{equation*}
\end{itemize}
Then, for $|\e|\neq0$ sufficiently small, there exists $z(\e)$ such
that $g(z(\e),\e)=0$ with
$|\pi^{\perp}z(\e)-\pi^{\perp}z_{a_{\e}}|=\CO(\e)$ and
$|\pi\,z(\e)-\pi\,z_{a_{\e}}|=\CO(\e^{k+1-l}).$
\end{mtheorem}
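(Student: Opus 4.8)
The plan is to carry out a Lyapunov--Schmidt reduction in the splitting $z=(a,b)\in\R^m\times\R^{n-m}$, separating $g(z,\e)=0$ into the \emph{auxiliary equation} $\pi^{\perp}g(z,\e)=0$ and the \emph{bifurcation equation} $\pi g(z,\e)=0$. First I would solve the auxiliary equation for $b$ as a function of $(a,\e)$. At $\e=0$, hypothesis (H$_a$) gives $\pi^{\perp}g_0(z_\al)=0$ for every $\al\in\cl(V)$, while hypothesis $(i)$ says that $\Delta_\al=\p_b\pi^{\perp}g_0(z_\al)$ is invertible; since $\cl(V)$ is compact and $\al\mapsto\Delta_\al^{-1}$ is continuous, the Implicit Function Theorem produces, on a uniform $\e$-neighbourhood of $0$, a $\C^{k+1}$ solution $b=b(a,\e)$ with $b(a,0)=\be(a)$. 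Writing $b(a,\e)=\be(a)+\sum_{i=1}^k\e^i\ga_i(a)+\CO(\e^{k+1})$ and inserting it into $\pi^{\perp}g=0$, I would match powers of $\e$: the order-$\e^i$ coefficient isolates the linear term $\Delta_\al\ga_i(\al)$ together with the contributions of the lower-order $\ga_j$ through the multivariate Taylor (Fa\`a di Bruno) expansion of the $\pi^{\perp}g_j$, and solving for $\ga_i$ reproduces exactly the recurrence \eqref{y}. This identification is an induction on $i$; it is technical bookkeeping, not a conceptual difficulty.

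Next I would substitute $b=b(a,\e)$ into the bifurcation equation, obtaining the reduced map $\wt g(a,\e)=\pi g\big((a,b(a,\e)),\e\big)$ on $\cl(V)$. The same expansion shows that the order-$\e^0$ coefficient is $\pi g_0(z_a)=0$ and that the order-$\e^i$ coefficient is precisely the bifurcation function $f_i(a)$ of \eqref{fi}, so that $\wt g(a,\e)=\CF^k(a,\e)+\CO(\e^{k+1})$, with a remainder uniform on $\cl(V)$ and $\C^1$ in $a$. Hence solving $g(z,\e)=0$ near $\CZ$ is equivalent to solving $\wt g(a,\e)=0$, and every solution $a(\e)$ yields $z(\e)=\big(a(\e),b(a(\e),\e)\big)$. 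The estimate $|\pi^{\perp}z(\e)-\pi^{\perp}z_{a_\e}|=\CO(\e)$ then follows at once from $b(a,\e)-\be(a)=\CO(\e)$ and the forthcoming bound $|a(\e)-a_\e|=\CO(\e^{k+1-l})$, because $l\le k$ forces $\CO(\e^{k+1-l})\subset\CO(\e)$.

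It remains to locate a zero of $\wt g$ near the approximate zero $a_\e$ from hypothesis $(iii)$. Writing $a=a_\e+w$ and using $\CF^k(a_\e,\e)=0$ together with $\p_\al^2\CF^k=\CO(\e^r)$ (a consequence of $(ii)$), I obtain
\[
\wt g(a_\e+w,\e)=\p_\al\CF^k(a_\e,\e)\,w+\CO\!\big(\e^r|w|^2\big)+\CO\!\big(\e^{k+1}\big).
\]
Hypothesis $(iv)$ makes $\p_\al\CF^k(a_\e,\e)$ injective, with $|\p_\al\CF^k(a_\e,\e)\,v|\ge P_0|\e|^l|v|$, hence invertible and with inverse of norm $\CO(|\e|^{-l})$. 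On a ball $B=\{|w|\le M|\e|^{k+1-l}\}$ the leading term is of order $|\e|^{k+1}$ on $\partial B$, the remainder $\CO(\e^{k+1})$ is absorbed by taking $M$ large, and the quadratic remainder is smaller than the leading term by a factor $\CO(|\e|^{\,r+k+1-2l})$, which is bounded precisely because $(iv)$ forces $l\le(k+r+1)/2$. I would then run a Browder (Brouwer) degree argument, as in Appendix B: the homotopy $s\,\wt g(a_\e+w,\e)+(1-s)\,\p_\al\CF^k(a_\e,\e)\,w$ does not vanish on $\partial B$, so $\deg(\wt g(\cdot,\e),B,0)=\sgn\det\p_\al\CF^k(a_\e,\e)\neq0$, yielding a zero $a(\e)\in B$ with $|a(\e)-a_\e|=\CO(\e^{k+1-l})$, which is the claimed $\pi$-estimate.

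The hard part will be this last step: keeping the error terms uniform and correctly balanced against the ball radius $|\e|^{k+1-l}$. The delicate regime is the equality $l=(k+r+1)/2$, where the quadratic remainder is of the \emph{same} order in $\e$ as the leading linear term on $\partial B$; there one must track the implied constants (the bounds on $\p_\al^2\CF^k$ and on the $\CO(\e^{k+1})$ tail against $P_0$) and shrink the ball accordingly, and it is exactly to obtain a robust existence conclusion in this borderline case that the degree-theoretic argument is preferable to a bare contraction mapping. The other genuinely careful, but routine, point is the inductive proof that the Taylor coefficients of the Lyapunov--Schmidt solution coincide termwise with the combinatorial expressions \eqref{fi} and \eqref{y}.
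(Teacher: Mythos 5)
Your proposal is correct and is essentially the paper's own proof: the same Lyapunov--Schmidt splitting with the Implicit Function Theorem applied to $\pi^{\perp}g=0$ (using compactness of $\cl(V)$ and hypothesis $(i)$), the same Fa\`{a} di Bruno induction identifying the Taylor coefficients of $\ov{\be}(a,\e)$ and of the reduced equation with \eqref{y} and \eqref{fi} (the paper's Claims \ref{c1} and \ref{c2}), and the same Brouwer-degree argument on a shrinking ball $B(a_\e,Q|\e|^{k+1-l})$ in which hypothesis $(iv)$ makes the linear part dominate both the quadratic remainder (precisely because $l\le(k+r+1)/2$) and the $\CO(\e^{k+1})$ tail. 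The only differences are cosmetic rather than structural: the paper first divides by $\e^r$ and compares $\wt{\de}$ with $\CG^k=f_r+\e f_{r+1}+\cdots+\e^{k-r}f_k$ via Lemma \ref{NNL}, whereas you homotope directly to the linearization $\p_\al\CF^k(a_\e,\e)w$; and in the borderline case $2l=k+r+1$, which you leave as ``track the implied constants and shrink the ball,'' the paper makes this explicit by fixing the radius at the optimal value $Q^*=P_0/(2c_0)$ and then shrinking $\e_0$ until its remainder bound $R(\e_0)$ falls below $P_0^2/(4c_0)$.
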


Theorem \ref{LSt1} is proved in section \ref{proof1}.

\smallskip

In the next corollary we present a classical result in the literature, which is a direct consequence of Theorem \ref{LSt1}.

\begin{corollary}\label{ct1}
In addiction to hypothesis (H$_a$),  assume that $f_1 = f_2 =\dots =
f_{k-1} = 0,$ that is $r=k,$ and that for each $\al\in \cl(V)$,
$\det(\Delta_{\al})\neq0$. If there exists $\al^* \in V$ such that
$f_k(\al^*)=0$ and  $\det\left(Df_k (\al^*) \right)\neq
0$, then there exists a branch of zeros $z(\e)$ with $g(z(\e),\e)=0$
and $|z(\e)-z_{\al^*}|=\CO(\e)$.
\end{corollary}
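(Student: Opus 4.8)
The plan is to obtain Corollary \ref{ct1} as a direct specialization of Theorem \ref{LSt1}, taking $r=k$, the branch $a_\e\equiv\al^*$ constant in $\e$, and $l=k$, and then reading off the conclusion. The key observation, which makes every hypothesis transparent to verify, is that the assumption $f_1=f_2=\dots=f_{k-1}=0$ collapses the bifurcation function \eqref{cF} to the single term $\CF^k(\al,\e)=\e^k f_k(\al)$.

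First I would dispose of the hypotheses inherited verbatim: (H$_a$) and item $(i)$ of Theorem \ref{LSt1} are assumed in the statement. For item $(ii)$, note that $\det(Df_k(\al^*))\neq0$ forces $f_k$ to be a local diffeomorphism near $\al^*$, so in particular $f_k$ is not identically zero; together with $f_1=\dots=f_{k-1}=0$ this is exactly item $(ii)$ with $r=k$. For item $(iii)$, I would simply set $a_\e=\al^*$ for every $\e$; then $\CF^k(\al^*,\e)=\e^k f_k(\al^*)=0$ since $f_k(\al^*)=0$, so the required $a_\e\in V$ exists for all $\e\in[-\e_0,\e_0]$.

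The only item needing a small computation is $(iv)$. Differentiating $\CF^k(\al,\e)=\e^k f_k(\al)$ and evaluating at $a_\e=\al^*$ gives $\p_\al\CF^k(\al^*,\e)=\e^k Df_k(\al^*)$. Since $Df_k(\al^*)$ is nonsingular, there is a constant $P_0>0$ with $|Df_k(\al^*)\cdot\al|\geq P_0|\al|$ for all $\al$ (one may take $P_0$ to be the least singular value of $Df_k(\al^*)$, equivalently $P_0=\|Df_k(\al^*)^{-1}\|^{-1}$), whence $|\p_\al\CF^k(\al^*,\e)\cdot\al|\geq P_0|\e|^k|\al|$. This is item $(iv)$ with $l=k$, and the admissibility constraint is met, since $r=k$ yields $l=k\leq(k+r+1)/2=k+1/2$, so $k$ is in fact the maximal admissible integer.

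With all four hypotheses verified, Theorem \ref{LSt1} produces, for $0<|\e|$ small, a zero $z(\e)$ of $g(\cdot,\e)$ with $|\pi^{\perp}z(\e)-\pi^{\perp}z_{\al^*}|=\CO(\e)$ and $|\pi\,z(\e)-\pi\,z_{\al^*}|=\CO(\e^{\,k+1-l})$. Substituting $l=k$ turns the second estimate into $\CO(\e)$ as well, and combining the two coordinate bounds gives $|z(\e)-z_{\al^*}|=\CO(\e)$, the claimed conclusion. The proof is essentially a bookkeeping verification; the one place to be careful is the lower bound in $(iv)$, where the nonsingularity of $Df_k(\al^*)$ must be converted into a uniform inequality via its least singular value, and where one must confirm that the resulting exponent $l=k$ respects the range $l\leq(k+r+1)/2$.
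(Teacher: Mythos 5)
Your proof is correct and takes essentially the same route as the paper's: both reduce the corollary to verifying the hypotheses of Theorem \ref{LSt1} with the constant branch $a_\e\equiv\al^*$ and $l=r=k$, using the nonsingularity of $Df_k(\al^*)$ to obtain the uniform lower bound $\left|\p_\al\CF^k(\al^*,\e)\cdot\al\right|\geq P_0|\e|^k|\al|$ demanded by item $(iv)$, and then checking $l=k\leq (k+r+1)/2$. If anything, your write-up is more careful than the paper's, which leaves the choice $a_\e=\al^*$ implicit and mislabels the key inequality as verifying hypothesis $(iii)$ when it is really hypothesis $(iv)$.
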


Corollary \ref{ct1} is proved in section \ref{proof1}

\subsection{Continuation of periodic solutions}\label{s12}

As an application of Theorem \ref{LSt1} we study  higher order
bifurcation of periodic solutions of the following $T$--periodic
$\C^{k+1}$, $k\geq 1$, differential system
\begin{equation}\label{s1}
x'=F(t,x,\e)=F_0(t,x)+\sum_{i=1}^k \e^i F_i(t,x)+\CO(\e^{k+1}),
\quad (t,z)\in\s^1\times\D.
\end{equation}
Here $\s^1=\R \big / (T\,\Z)$ with $T\Z=\lbrace T, 2T, \dots
\rbrace$ and the prime denotes derivative with respect to time $t$.
Now the manifold $\CZ$, defined in \eqref{Z}, is seen as a set of
initial conditions of the unperturbed system

\begin{equation}\label{ups}
x'(t)=F_0(t,x).
\end{equation}

In fact we shall assume that all solutions of the unperturbed system
starting at points of $\CZ$ are $T$-periodic, recall that the dimension of
$\CZ$ is $m\leq n$. Formally,  let
$x(\cdot,z,0):[0,t_z)\rightarrow\mathbb{R}^n$ denote the solution of
\eqref{ups} such that $x(0,z,0)=z$, we assume that
\begin{itemize}
\item[(H$_b$)] $\CZ \subset \D$ and for each $\al \in\cl(V)$ the
solution $x(t,z_\al,0)$ of \eqref{ups} is $T$-periodic.
\end{itemize}

As usual $x(\cdot,z,\e):[0,t_{(z,\e)})\rightarrow\mathbb{R}^n$
denotes a solution of system \eqref{s1} such that $x(0,z,\e)=z$.
Moreover, let $Y(t,z)$ be a fundamental matrix solution of the
linear differential system
\begin{equation}\label{vs}
y'=\dfrac{\partial}{\partial x}F_0(t,x(t,z,0))y.
\end{equation}
For sake of simplicity when $z=z_\al \in \CZ$ we denote $Y_\al(t)=Y(t,z_\al)$.

\smallskip

Given
fundamental matrix solution $Y(t,z)$, the \textit{averaged functions
of order $i$}, $g_i:\cl(V)\rightarrow \R^n$,  $i=1,2,\ldots,k,$ of system \eqref{s1} is defined as
\begin{equation}\label{smoothgi}
g_{i}(z)=Y(T,z)^{-1}\dfrac{y_i(T,z)}{i!},
\end{equation}
where
\begin{equation}\label{smoothyi}
\begin{array}{RL}
y_1(t,z)=\!\!\!\!&Y(t,z)\!\!\int_{0}^t\!\!Y(s,z)^{-1}F_1(s,x(s,z,0))ds,\vspace{0.2cm}\\
y_i(t,z)=\!\!\!\!&i!\,Y(t,z)\!\!\int_{0}^t\!\!Y(s,z)^{-1}\Bigg(\!\!F_i(s,x(s,z,0))\\
\!\!\!\!&+\sum_{S'_i}\dfrac{1}{b_1!\,b_2!2!^{b_2}\cdots
b_{i-1}!(i-1)!^{b_{i-1}}}\p^{I'}F_{0}(s,x(s,z,0))\bigodot_{j=1}^{i-1}y_j(s,z)^{b_j}\\
\!\!\!\!&+\sum_{l=1}^{i-1}\sum_{S_l} \dfrac{1}{b_1!\,b_2!2!^{b_2}\cdots
b_l!l!^{b_l}}\p^LF_{i-l}(s,x(s,z,0))
\bigodot_{j=1}^ly_j(s,z)^{b_j}\!\!\Bigg)ds.
\end{array}
\end{equation}
Using now the functions $g_i$ as stated in \eqref{smoothgi} we define the functions $f_i$, $\CF^k$, and
$\gamma_i$ given by  \eqref{fi}, \eqref{cF}, and \eqref{y}, respectively.

\smallskip

Recently in \cite{N} the Bell polynomials were used to provide an alternative formula for the recurrence \eqref{smoothyi}. This new formula can also make easier the computational implementation of the bifurcation functions \eqref{smoothgi}.

\smallskip

The next theorem is the second main result of this paper.  Again,
for sake of simplicity, we take $f_0=0$.

\begin{mtheorem}\label{PSt1}
Let $\Delta_{\al}$ denote the lower right corner $(n-m)\times (n-m)$
matrix of the matrix $Y(0,z)^{-1}-Y(T,z)^{-1}$. In
additional to hypothesis (H$_b$) we assume that
\begin{itemize}
\item[$(i)$] for each $\al \in\cl(V) $, $\det(\Delta_{\al})\neq0$;

\item[$(ii)$] for some $r\in\{1,\ldots,k\}$,  $f_1 = f_2 =\dots = f_{r-1}
= 0$ and $f_r$ is not identically zero;

\item[$(iii)$] there exists a small parameter $\e_0>0$  such that for each
$\e\in[-\e_0,\e_0]$ there exists $a_{\e}\in V$ satisfying $\CF^k(a_{\e},\e )=0$;

\item[$(iv)$] there exist a constant $P_0>0$ and a positive integer $l\leq
(k+r+1)/2$ such that
\begin{equation*}
\left|\p_\al\CF^k(a_\e,\e)\cdot \al\right|\geq P_0|\e^l||\al|, \quad
\text{for} \quad \al\in V.
\end{equation*}
\end{itemize}
Then, for $|\e|\neq0$ sufficiently small, there exists a
$T$-periodic solution $\varphi(t,\e)$ of system \eqref{s1} such that
$|\pi\,\varphi(0,\e)-\pi\,z_{a_{\e}}|=\CO(\e^{k+1-l}),$ and
$|\pi^{\perp}\varphi(0,\e)-\pi^{\perp}z_{a_{\e}}|=\CO(\e)$.
\end{mtheorem}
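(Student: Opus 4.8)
The plan is to reduce Theorem \ref{PSt1} to Theorem \ref{LSt1} by verifying that the displacement function of system \eqref{s1} has precisely the structure analyzed in the Lyapunov--Schmidt result, and that the averaged functions \eqref{smoothgi} coincide with the coefficients $g_i$ appearing in \eqref{gz}. First I would introduce the displacement map
\[
g(z,\e)=Y(T,z)^{-1}\big(x(T,z,\e)-z\big),
\]
whose zeros are exactly the initial conditions of $T$-periodic solutions of \eqref{s1}; the nonsingular factor $Y(T,z)^{-1}$ is inserted only to simplify the linear part. The key analytic step is to expand $x(t,z,\e)$ in powers of $\e$, writing $x(t,z,\e)=x(t,z,0)+\sum_{i=1}^k \e^i y_i(t,z)/i!+\CO(\e^{k+1})$, and to show that the coefficients $y_i(t,z)$ obey exactly the variation-of-parameters recurrence \eqref{smoothyi}. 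This is obtained by substituting the expansion into \eqref{s1}, Taylor-expanding $F(t,x,\e)$ around $x(t,z,0)$ using the Fa\`a di Bruno multi-index combinatorics encoded by the sets $S_l$ and $S'_i$, and matching equal powers of $\e$; each $y_i$ then solves a linear inhomogeneous equation with fundamental matrix $Y(t,z)$, yielding \eqref{smoothyi}.

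Next I would evaluate at $t=T$ to read off the coefficients of $g(z,\e)$. Under hypothesis (H$_b$) the unperturbed solution starting at $z_\al\in\CZ$ is $T$-periodic, so $x(T,z_\al,0)=z_\al$ and hence $g_0(z)=Y(T,z)^{-1}\big(x(T,z,0)-z\big)$ vanishes on $\CZ$; this is exactly hypothesis (H$_a$) for the function $g$. The order-$i$ coefficient becomes $g_i(z)=Y(T,z)^{-1}y_i(T,z)/i!$, matching \eqref{smoothgi}. Thus the averaged functions of \eqref{s1} are identified with the Taylor coefficients of the displacement function, and the bifurcation functions $f_i$, $\CF^k$, and the auxiliary maps $\ga_i$ built from them via \eqref{fi}, \eqref{cF}, \eqref{y} are literally the same objects in both theorems.

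I would then check that condition $(i)$ of Theorem \ref{PSt1} matches condition $(i)$ of Theorem \ref{LSt1}. For this I must show that the lower-right $(n-m)\times(n-m)$ block $\Delta_\al$ of $Dg_0(z_\al)$ equals the corresponding block of $Y(0,z_\al)^{-1}-Y(T,z_\al)^{-1}$, which is what Theorem \ref{PSt1} assumes to be nonsingular. Differentiating $g_0(z)=Y(T,z)^{-1}\big(x(T,z,0)-z\big)$ and using that $\p_z x(T,z,0)=Y(T,z)$ when $Y$ is the variational fundamental matrix normalized by $Y(0,z)=\mathrm{Id}$, one finds $Dg_0(z)=Y(T,z)^{-1}\big(Y(T,z)-\mathrm{Id}\big)=\mathrm{Id}-Y(T,z)^{-1}$; since $Y(0,z)=\mathrm{Id}$ this is exactly $Y(0,z)^{-1}-Y(T,z)^{-1}$, so the two $\Delta_\al$ coincide and condition $(i)$ transfers verbatim. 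Conditions $(ii)$, $(iii)$, $(iv)$ are stated identically in terms of the same $f_i$ and $\CF^k$, so they carry over with no further work.

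Finally I would invoke Theorem \ref{LSt1} for the function $g(z,\e)$: it produces, for $|\e|\neq 0$ small, a zero $z(\e)$ with $|\pi^\perp z(\e)-\pi^\perp z_{a_\e}|=\CO(\e)$ and $|\pi\,z(\e)-\pi\,z_{a_\e}|=\CO(\e^{k+1-l})$. Setting $\varphi(t,\e)=x(t,z(\e),\e)$ gives the desired $T$-periodic solution of \eqref{s1}, and $\varphi(0,\e)=z(\e)$ transports the two estimates directly into the stated conclusions. I expect the main obstacle to be the bookkeeping in the second paragraph: rigorously justifying that the Taylor expansion of the flow produces coefficients governed precisely by the combinatorial sums over $S_l$ and $S'_i$ in \eqref{smoothyi}, and controlling the uniformity of the $\CO(\e^{k+1})$ remainder in $z$ over the compact set $\cl(V)$ so that the displacement function genuinely has the form \eqref{gz} required by Theorem \ref{LSt1}. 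The normalization $Y(0,z)=\mathrm{Id}$ should be fixed at the outset, since the identification of $\Delta_\al$ in the third paragraph depends on it.
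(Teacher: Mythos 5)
Your proposal follows essentially the same route as the paper: the expansion of the flow with coefficients $y_i$ satisfying \eqref{smoothyi} is precisely the paper's Fundamental Lemma (Lemma \ref{l1}, proved exactly as you indicate, via Fa\`a di Bruno plus variation of parameters for the linear inhomogeneous equations), and the passage through the displacement function $g(z,\e)=Y(T,z)^{-1}\big(x(T,z,\e)-z\big)$ followed by an application of Theorem \ref{LSt1} is the paper's proof almost verbatim.

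The one point where you deviate, and lose a little generality, is the identification of $\Delta_\al$. You impose the normalization $Y(0,z)=\mathrm{Id}$ and use $\p_z x(T,z,0)=Y(T,z)$. The theorem, however, is stated for an \emph{arbitrary} fundamental matrix solution $Y(t,z)$ of \eqref{vs}: the averaged functions \eqref{smoothgi}, hence the bifurcation functions $f_i$, $\CF^k$ and the matrix $\Delta_\al$, all depend on the choice of $Y$, and replacing $Y(t,z)$ by $Y(t,z)C(z)$ multiplies each $g_i$ on the left by $C(z)^{-1}$, which does not transform hypotheses $(i)$--$(iv)$ in any trivially equivalent way; so proving the normalized case does not formally yield the general statement. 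The paper avoids the normalization altogether by writing $\p_z x(T,z,0)=Y(T,z)Y(0,z)^{-1}$ (the principal matrix solution expressed through an arbitrary fundamental one), which gives $Dg_0(z_\al)=Y(0,z_\al)^{-1}-Y(T,z_\al)^{-1}$ directly; with that one-line replacement your argument coincides with the paper's. Note also that both your computation and the paper's silently discard the term $\big(\p_z Y(T,z)^{-1}\big)\big(x(T,z,0)-z\big)$ arising from the product rule, which is legitimate only because $x(T,z,0)=z$ on $\CZ$ and $\Delta_\al$ is evaluated there. The remaining technical points you flag (uniformity of the $\CO(\e^{k+1})$ remainder and existence of solutions up to time $T$ for small $\e$) are settled in the paper by smooth dependence on parameters and by Theorem 8.3 of \cite{Am}.
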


Theorem \ref{PSt1} is proved in section \ref{proof2}.

\smallskip

In the next corollary we present a classical result in the literature, which is a direct consequence of Theorem \ref{PSt1}.

\begin{corollary}\label{ct1a}
In addiction to hypothesis (H$_b$) we assume that $f_1 = f_2 =\dots =
f_{k-1} = 0$, $r=k$ and that for each $\al\in \cl(V)$,
$\det(\Delta_{\al})\neq0$. If there exists $\al^* \in V$ such that
$f_k(\al^*)=0$ and  $\det\left(Df_k (\al^*) \right)\neq
0$, then there exists a $T$-periodic solution $\varphi(t,\e)$ of
\eqref{s1} such that  $|\varphi(0,\e)-z_{\al^*}|=\CO(\e)$.
\end{corollary}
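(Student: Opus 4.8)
The plan is to derive Corollary \ref{ct1a} as a direct consequence of Theorem \ref{PSt1} by verifying that its four hypotheses are automatically satisfied under the stronger assumptions of the corollary. The strategy exactly parallels how Corollary \ref{ct1} should follow from Theorem \ref{LSt1} in the purely finite-dimensional setting; indeed, the periodic-solution problem has been reduced to the zero-finding problem through the identifications \eqref{smoothgi}--\eqref{smoothyi}, so the corollary is essentially the specialization $r=k$ together with a nondegenerate zero of the top-order bifurcation function.

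First I would check hypotheses (i) and (ii) of Theorem \ref{PSt1}, which are immediate: the corollary assumes $\det(\Delta_\al)\neq0$ for all $\al\in\cl(V)$, and the assumption $f_1=\cdots=f_{k-1}=0$ with $r=k$ forces $f_r=f_k$ to be the first possibly-nonvanishing bifurcation function. The existence of $\al^*$ with $f_k(\al^*)=0$ shows $f_k$ has a zero, and the nondegeneracy $\det(Df_k(\al^*))\neq0$ will be used to produce the branch $a_\e$. Next I would verify (iii). Since $f_1=\cdots=f_{k-1}=0$, the bifurcation function collapses to $\CF^k(\al,\e)=\e^k f_k(\al)$, so $\CF^k(a_\e,\e)=0$ is equivalent to $f_k(a_\e)=0$. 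By the Implicit Function Theorem applied at $\al^*$, where $Df_k(\al^*)$ is invertible, the zero persists: in fact the constant choice $a_\e\equiv\al^*$ already satisfies $\CF^k(\al^*,\e)=\e^k f_k(\al^*)=0$ for all $\e$, giving (iii) with $\e_0$ arbitrary.

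The crux is hypothesis (iv), where I must exhibit $P_0>0$ and an integer $l\leq(k+r+1)/2=(2k+1)/2$, hence $l\leq k$, with $|\p_\al\CF^k(a_\e,\e)\cdot\al|\geq P_0|\e^l||\al|$ on $V$. Taking $l=k$ (which satisfies the bound since $k\leq(2k+1)/2$) and using $\CF^k(\al,\e)=\e^k f_k(\al)$, I compute $\p_\al\CF^k(a_\e,\e)=\e^k Df_k(\al^*)$. Because $Df_k(\al^*)$ is a nonsingular constant matrix, its smallest singular value $\sigma_{\min}>0$ furnishes $|Df_k(\al^*)\cdot\al|\geq\sigma_{\min}|\al|$ for all $\al$, so setting $P_0=\sigma_{\min}$ yields $|\p_\al\CF^k(\al^*,\e)\cdot\al|=|\e|^k|Df_k(\al^*)\cdot\al|\geq P_0|\e^k||\al|$, which is precisely (iv) with $l=k$. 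This is the one step that genuinely uses the nondegeneracy hypothesis rather than merely restating the assumptions, and it is where care is needed to confirm that the admissible range of $l$ in Theorem \ref{PSt1} indeed permits $l=k$ when $r=k$.

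With all four hypotheses of Theorem \ref{PSt1} verified, the theorem provides a $T$-periodic solution $\varphi(t,\e)$ of \eqref{s1} satisfying $|\pi\,\varphi(0,\e)-\pi\,z_{a_\e}|=\CO(\e^{k+1-l})=\CO(\e)$ and $|\pi^\perp\varphi(0,\e)-\pi^\perp z_{a_\e}|=\CO(\e)$. Since $a_\e\equiv\al^*$ we have $z_{a_\e}=z_{\al^*}$, and combining the two estimates on the complementary projections gives $|\varphi(0,\e)-z_{\al^*}|=\CO(\e)$, which is exactly the conclusion of Corollary \ref{ct1a}. The only subtlety I anticipate is bookkeeping the order estimates through the two projections $\pi$ and $\pi^\perp$ to assemble the single $\CO(\e)$ bound on the full initial condition, but this is routine once $l=k$ has been fixed.
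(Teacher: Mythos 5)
Your proof is correct and takes essentially the same approach as the paper: reduce to Theorem \ref{PSt1} by observing $\CF^k(\al,\e)=\e^k f_k(\al)$, choosing the constant branch $a_\e\equiv\al^*$, and using nonsingularity of $Df_k(\al^*)$ to verify hypothesis $(iv)$ with $l=r=k\leq (k+r+1)/2$. The paper's argument (written out for the analogous Corollary \ref{ct1}) is identical in structure, merely phrasing the key lower bound via two-sided constants $b,c$ for the invertible linear map rather than via its smallest singular value.
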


Corollary \ref{ct1a} is proved in section \ref{proof2}. An
application of Theorem B is performed in Section \ref{ApTB}.

\smallskip

It is worth to emphasize that Theorem \ref{PSt1} is still true when $m=n$. In fact,  assuming that
$V$ is an open subset of $\R^n$ then $\CZ =\cl(V)\subset \D$ and the
projections $\pi$ and $\pi^\perp$ become the identity and the null
operator respectively. Moreover, in this case the bifurcation functions
$f_i:V\rightarrow \R^n$, for $i=1,2,\dots,k$, are the averaged functions
$f_i(\al)=g_i(\al)$ defined in \eqref{smoothgi}. Thus we have the following corollary, which recover the main result from \cite{ddn}.

\begin{corollary}
Consider $m=n$,  $z_\al=\al\in\CZ$ and the hypothesis (H$_b$).  Thus
the result of Theorem  \ref{PSt1} holds without any assumption about
$\Delta_\al$.
\end{corollary}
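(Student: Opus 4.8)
The plan is to derive this final corollary as a special case of Theorem~\ref{PSt1} by carefully examining what happens to all the constituent objects when $m=n$. First I would observe that when $m=n$, the set $V$ is open in $\R^n$, and the manifold $\CZ=\cl(V)$ has full dimension, so that $\be$ maps to $\R^{n-m}=\R^0=\{0\}$ and is vacuous; consequently $z_\al=\al$ as claimed. The key structural simplification is that $\pi:\R^n\times\R^0\to\R^n$ is the identity and $\pi^\perp:\R^n\times\R^0\to\R^0$ is the null operator. I would then trace this through the definitions \eqref{fi}, \eqref{y}, and \eqref{cF}: since $\pi^\perp g_1(z_\al)=0$, the recurrence \eqref{y} yields $\ga_1\equiv0$, and by induction every $\ga_i\equiv0$ because each term in \eqref{y} is a $\pi^\perp$-projection and hence vanishes. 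With all $\ga_j\equiv0$, formula \eqref{fi} collapses to $f_i(\al)=\pi g_i(z_\al)=g_i(\al)$, establishing that the bifurcation functions coincide with the averaged functions.

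Next I would address the hypothesis on $\Delta_\al$. By definition $\Delta_\al$ is the lower right $(n-m)\times(n-m)=0\times0$ corner of the relevant matrix, which is the empty matrix. By convention $\det(\Delta_\al)=\det(\emptyset)=1\neq0$, so hypothesis~$(i)$ of Theorem~\ref{PSt1} is automatically satisfied; this is precisely why the corollary can assert the conclusion \emph{without any assumption about} $\Delta_\al$. I would verify analogously that conditions $(ii)$--$(iv)$ of Theorem~\ref{PSt1}, which are hypotheses rather than conclusions, carry over verbatim to the statement of the corollary, so that invoking Theorem~\ref{PSt1} under hypothesis (H$_b$) is legitimate and yields the existence of the $T$-periodic solution $\varphi(t,\e)$.

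Finally I would translate the conclusion of Theorem~\ref{PSt1} into the full-dimensional setting. The estimate $|\pi^\perp\varphi(0,\e)-\pi^\perp z_{a_\e}|=\CO(\e)$ becomes vacuous since $\pi^\perp$ is the null operator, and the remaining estimate $|\pi\,\varphi(0,\e)-\pi\,z_{a_\e}|=\CO(\e^{k+1-l})$ becomes a bound on $|\varphi(0,\e)-z_{a_\e}|$ itself, since $\pi$ is the identity. This recovers the statement that the periodic solution bifurcates from $\CZ$, matching the main result of \cite{ddn}.

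I do not expect a serious obstacle here, as the corollary is essentially a bookkeeping specialization; the one point requiring care is the inductive argument that all $\ga_i$ vanish, which must be stated cleanly because the recurrence \eqref{y} mixes derivatives of $g_0$ with lower-order $\ga_j$. The mild subtlety worth flagging explicitly is the convention $\det(\emptyset)=1$ for the empty matrix, which is what renders hypothesis~$(i)$ automatic and must be invoked rather than left implicit.
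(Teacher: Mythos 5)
Your proposal is correct and takes essentially the same approach as the paper, which likewise treats the corollary as a degenerate specialization: for $m=n$ the projections $\pi$ and $\pi^\perp$ become the identity and the null operator, so the bifurcation functions $f_i$ reduce to the averaged functions $g_i(\al)$ of \eqref{smoothgi} and the condition on $\Delta_\al$ becomes vacuous. Your added care about the vanishing of the $\ga_i$ (which is immediate, since they take values in $\R^{n-m}=\{0\}$) and the empty-matrix determinant convention is just explicit bookkeeping of what the paper states as a remark preceding the corollary.
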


\section{Proof of Theorem \ref{LSt1} and Corollary \ref{ct1}}\label{proof1}

A useful tool to study the zeros of a function is the Browder
degree (see the Appendix B for some of their properties).  Let $g \in
C^1(\D)$, $\cl(V)\subset \D$ and $\mathbf{Z}_g=\lbrace z\in V :
g(z)=0\rbrace$. We also assume that $J_g(z)\neq 0$ for all $z\in
\mathbf{Z}_g$, where $J_g(z)$ is the Jacobian determinant of  $g$ at
$z$. This assures that the set $\mathbf{Z}_g$ is formed by a finite number of isolated
points. Then the Brouwer degree of $g$ at $0$ is
\begin{equation}\label{defbr}
d_B(g,V,0)=\sum_{z\in\mathbf{Z}_g}\mathrm{sign}\left(J_g(z)\right).
\end{equation}

As one of the main properties of the Brouwer degree we have that: ``if $d(f,V,0)\neq 0$ then there exists
$x_0\in V$ such that $f(x_0)=0$''(see item (i) of Theorem \ref{ApAt1} from Appendix B).

\smallskip

The next result
is a key lemma for proving Theorem \ref{LSt1}.

\begin{lemma}\label{NNL}
Let $V$ be an open bounded subset of $\R^m$. Consider the continuous
functions $f_i:\cl(V)\to \mathbb{R}^n$, $i=0,1,\cdots,\kappa$, and
$f,g,r:\cl(V)\times [-\e_0,\e_0] \rightarrow \mathbb{R}^n$ given by
\[
g(x,\e)=f_0(x)+\e f_1(x)+\cdots+\e^\kappa f_\kappa(x) \mbox{ and }
f(x,\e)=g(x,\e)+\e^{\kappa+1}r(x,\e).
\]
Let $V_\e \subset V$, $R=\max\{\vert r(x,\e) \vert :
(x,\e)\in\cl(V)\times [-\e_0,\e_0]\}$ and assume that $\vert g(x,\e)
\vert> R\vert \e\vert^{\kappa+1}$ for all $ x \in \p V_\e $ and
$\e\in[-\e_0,\e_0]\setminus\{0\}$. Then for each
$\e\in[-\e_0,\e_0]\setminus\{0\}$ we have $d_B\left(f(\cdot,
\e),V_\e,0 \right)=d_B\left(g(\cdot, \e),V_\e,0  \right)$.
\end{lemma}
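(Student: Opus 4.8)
The plan is to prove this via a homotopy invariance argument for the Brouwer degree, which is exactly the kind of tool this lemma is designed to invoke. The statement asks us to show that $f(\cdot,\e)$ and $g(\cdot,\e)$ have the same Brouwer degree on $V_\e$ at $0$, and since $f = g + \e^{\kappa+1} r$ differs from $g$ by a term that is small relative to $g$ on the boundary $\p V_\e$, the natural idea is to connect $f$ and $g$ by a straight-line homotopy and verify that the homotopy never vanishes on the boundary.

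Concretely, for a fixed $\e\in[-\e_0,\e_0]\setminus\{0\}$, I would define the homotopy $H:\cl(V_\e)\times[0,1]\to\R^n$ by $H(x,t)=g(x,\e)+t\,\e^{\kappa+1}r(x,\e)$, so that $H(\cdot,0)=g(\cdot,\e)$ and $H(\cdot,1)=f(\cdot,\e)$. The homotopy invariance property of the Brouwer degree (see the properties collected in Appendix B) guarantees that $d_B(H(\cdot,0),V_\e,0)=d_B(H(\cdot,1),V_\e,0)$, provided $H(x,t)\neq 0$ for every $x\in\p V_\e$ and every $t\in[0,1]$. Establishing this non-vanishing on the boundary is the crux of the argument.

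The non-vanishing estimate is the one genuine step, and it follows directly from the hypothesis by a triangle-inequality bound. For $x\in\p V_\e$ and $t\in[0,1]$ we have
\[
|H(x,t)|=\big|g(x,\e)+t\,\e^{\kappa+1}r(x,\e)\big|\geq |g(x,\e)|-t\,|\e|^{\kappa+1}|r(x,\e)|.
\]
Using $t\leq 1$ and the definition $R=\max\{|r(x,\e)|:(x,\e)\in\cl(V)\times[-\e_0,\e_0]\}$, the last term is bounded by $|\e|^{\kappa+1}R$, so that
\[
|H(x,t)|\geq |g(x,\e)|-R|\e|^{\kappa+1}>0,
\]
where the strict inequality is precisely the assumed boundary condition $|g(x,\e)|>R|\e|^{\kappa+1}$ for all $x\in\p V_\e$. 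Thus $H$ is admissible for the homotopy invariance of the degree.

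I would not expect any serious obstacle here: the lemma is essentially a packaging of homotopy invariance together with the elementary perturbation estimate, and the main content lies in verifying that the hypotheses have been arranged so that the straight-line homotopy stays away from zero on the boundary. The only point requiring mild care is ensuring that $g(\cdot,\e)$ and $f(\cdot,\e)$ are admissible maps for the Brouwer degree on $V_\e$ (continuous on $\cl(V_\e)$ and nonzero on $\p V_\e$), which is immediate from continuity of the $f_i$ and $r$ together with the same boundary estimate applied at $t=0$ and $t=1$. Concluding, the chain of equalities $d_B(f(\cdot,\e),V_\e,0)=d_B(H(\cdot,1),V_\e,0)=d_B(H(\cdot,0),V_\e,0)=d_B(g(\cdot,\e),V_\e,0)$ yields the claim for each $\e\in[-\e_0,\e_0]\setminus\{0\}$.
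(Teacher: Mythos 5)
Your proposal is correct and matches the paper's proof in all essentials: both use the straight-line homotopy $g(x,\e)+t\,\e^{\kappa+1}r(x,\e)$ together with homotopy invariance of the Brouwer degree (item (iii) of the theorem in Appendix B). The only cosmetic difference is that the paper verifies non-vanishing on $\p V_\e$ by contradiction, whereas you give the equivalent direct triangle-inequality estimate.
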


\begin{proof}
For a fixed $\e\in[-\e_0,\e_0]\setminus\{0\}$, consider a continuous
homotopy between $g(\cdot,\e)$ and $f(\cdot,\e)$ given by
$g_t(x,\e)=g(x,\e)+t\left(
f(x,\e)-g(x,\e)\right)=g(x,\e)+t\,\e^{\kappa+1}r(x,\e)$.  We claim that
$0\not\in g_t(\p V_\e, \e)$ for every $t\in[0,1]$. As usual $\p
V_\e$ denotes the boundary of the set $V_\e$. Indeed,  assuming that
$0\in g_{t_\e}(\p V_\e, \e),$ for some $t_\e\in[0,1],$ we may find
$x_\e\in\p V_\e$ such that $g_{t_\e}(x_\e,\e)=0$ and, consequently,
$g(x_\e,\e)=-t_\e\e^{\kappa+1} r(x_\e,\e)$. Thus $\vert g(x_\e,\e) \vert
\leq R\vert\e \vert^{\kappa+1}$, which contradicts the hypothesis $\vert
g(x_\e,\e) \vert > R\vert\e\vert^{\kappa+1}$. From Theorem \eqref{ApAt1}
(iii) we conclude that $d_B(g_t(\cdot,\e),V_\e,0)$ is constant for
$t\in[0,1]$ and then  $d_B\left(f(\cdot, \e),V_\e,0
\right)=d_B\left(g(\cdot, \e),V_\e,0  \right)$.
\end{proof}

The above lemma  provides a stratagem to track  zeros of the perturbed function $f(x,\e)$ using a shrinking neighborhood around the zeros of $g(x,\e)$ that preserves its Brouwer degree. The way how it works can be blurry at the first moment, so to make it clear we present the following example:

\begin{example}Consider the real function $f(x,\e)=g(x,\e)+\e^2r(x,\e)$ with $(x,\e) \in [-1,\, 1]\times[-\e_0,\e_0]$, $g(x,\e)=x^2-\e x$, and  $|r(x,\e)|\leq1/5$. The function $g(x,\e)$ has two zeros $a=0$ and $a_\e=\e$. Taking $V_\e=(\e/2,\,3\e/2 )$ we have that, for $|\e|\neq0$ sufficiently small,  $a_\e \in V_\e$ and $d_B\left(g(\cdot, \e),V_\e,0
\right)=1$ $($see Definition \eqref{defbr}$)$. Furthermore $\p V_\e=\left\lbrace \e/2,\,3 \e/2 \right\rbrace$, $|g(\e/2, \e)|=\e^2/4$, and $|g(3\e/2, \e)|=3 \e^2/4$. Thus $|g(x, \e)|>\e^2/5\geq\e^2\max\{\vert r(x,\e) \vert :
(x,\e)\in [0,1]\times [-\e_0,\e_0]\}$. Therefore from the previous lemma we know that $d_B\left(f(\cdot, \e),V_\e,0\right)=1$. From the above property of the Brouwer degree we conclude that there exists $\al_{\e}\in V_{\e}$ such that $f(\al_{\e},\e)=0$.
\end{example}

Now we recall the Fa\'{a} di Bruno's Formula (see \cite{J}) about the
$l^{th}$ derivative of a composite function.

\smallskip

\noindent{\bf Fa\'{a} di Bruno's Formula} {\it If $u$ and $v$ are
functions with a sufficient number of derivatives, then
\[
\dfrac{d^l}{dt^l}u(v(t))=\sum_{S_l}\dfrac{l!}{b_1!\,b_2!2!^{b_2}\cdots
b_l!l!^{b_l}}u^{(L)}(v(t))\bigodot_{j=1}^l v^{(j)}(t)^{b_j},
\]
where $S_l$ is the set of all $l$--tuples of non--negative integers
$(b_1,b_2,\cdots,b_l)$ which are solutions of the equation
$b_1+2b_2+\cdots+lb_l=l$ and $L=b_1+b_2+\cdots+b_l$.}

\smallskip

The remainder of this section consists in the proof of Theorem \ref{LSt1}, which is split in several claims, and the proof Corollary \ref{ct1}

\smallskip

\begin{proof}[Proof of Theorem \ref{LSt1}]
We consider $g=(\pi g,\pi^{\perp} g)$,  $g_i=(\pi
g_i,\pi^{\perp} g_i)$ for $i=0,1,2,\dots, k$, and
$z=(a,b)\in\R^m\times\R^{n-m}$ for $z\in \D$. So
\[
\dfrac{\p g}{\p z}(z_{\al},0)=D\,g_0(z_{\al})=
\left(\begin{array}{CC}
\dfrac{\p\pi g_0}{\p a}(z_{\al})&\dfrac{\p\pi g_0}{\p
b}(z_{\al})\\[0.4cm]
\dfrac{\p\pi^{\perp}g_0}{\p a}(z_{\al})&\dfrac{\p\pi^{\perp}g_0}{\p
b}(z_{\al}) \end{array}\right).
\]
We write $\Delta_{\al}=\dfrac{\p\pi^{\perp}g_0}{\p b}(z_{\al})$.
From hypotheses,
$\pi^{\perp}g(\al,\be(\al),0)=\pi^{\perp}g_0(z_{\al})=0$ and
\[
\det\left(\dfrac{\p\pi^{\perp}g}{\p b}(\al,\be(\al),0)\right)=
\det\left(\dfrac{\p\pi^{\perp}g_0}{\p b}(z_{\al})\right)=
\det\left(\Delta_{\al}\right)\neq0.
\]
Thus applying the {\it Implicit Function Theorem}  it follows that
there exists an open neighborhood $U\times(-\e_1,\e_1)$ of
$\cl(V)\times\{0\}$ with $\e_1\leq\e_0$, and a $\C^{k+1}$ function
$\ov{\be}:U\times(-\e_1,\e_1)\rightarrow\R^{n-m}$ such that
$\pi^{\perp}g(a,\ov{\be}(a,\e),\e)=0$ for each $(a,\e)\in
U\times(-\e_1,\e_1)$ and $\ov{\be}(\al,0)=\be(\al)$ for every
$\al\in\cl(V)$.

\smallskip

From here, this proof will be split in several claims.

\begin{claim}\label{c1}
The equality $(\p^i\ov{\be}/\p\e^i)(\al,0)=\gamma_i(\al)$ holds for
$i=1,2,\ldots,k.$
\end{claim}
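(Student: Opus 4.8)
The plan is to prove the claim by induction on $i$, differentiating the implicit identity $\pi^\perp g(\al,\ov\be(\al,\e),\e)\equiv 0$ (valid on $U\times(-\e_1,\e_1)$ by the construction of $\ov\be$) $i$ times with respect to $\e$ at $\e=0$, and verifying that the resulting relation is exactly the one defining $\gamma_i$ in \eqref{y}. Fix $\al\in\cl(V)$, abbreviate $b(\e)=\ov\be(\al,\e)$ and $\be_i:=\p_\e^i\ov\be(\al,0)$ (so that $\be_0=\be(\al)=\pi^\perp z_\al$), and note that, the remainder in \eqref{gz} being $\CO(\e^{k+1})$ and smooth, it does not affect the $\e$-derivatives of order $\leq k$ at $\e=0$; hence for $i\leq k$ only the polynomial part $\sum_{j=0}^k\e^j\pi^\perp g_j(\al,b)$ contributes. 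The base case $i=1$ is immediate: a single differentiation yields $\Delta_\al\be_1+\pi^\perp g_1(z_\al)=0$, so $\be_1=-\Delta_\al^{-1}\pi^\perp g_1(z_\al)=\gamma_1(\al)$.

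For the inductive step I would write $\pi^\perp g(\al,b(\e),\e)=\sum_{j=0}^k\e^j P_j(\e)+\CO(\e^{k+1})$ with $P_j(\e):=\pi^\perp g_j(\al,b(\e))$, and take $\p_\e^i$ at $\e=0$. The Leibniz rule applied to each $\e^j P_j(\e)$ annihilates every cross term except the one in which all $j$ derivatives fall on the explicit factor $\e^j$, collapsing the total to $\sum_{l=0}^{i}\frac{i!}{l!}P_{i-l}^{(l)}(0)=0$ after reindexing $l=i-j$. The key tool for each summand is Fa\`a di Bruno's formula applied to the composite $P_{i-l}(\e)=\pi^\perp g_{i-l}(\al,b(\e))$, which expresses $P_{i-l}^{(l)}(0)$ as a sum over $S_l$ of terms $\frac{l!}{c_1!c_2!2!^{c_2}\cdots c_l!l!^{c_l}}\p_b^L\pi^\perp g_{i-l}(z_\al)\bigodot_{j=1}^l\be_j^{c_j}$; the prefactors $i!/l!$ and $l!$ cancel, leaving exactly the multinomial weights of \eqref{y} together with an overall factor $i!$.

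The decisive bookkeeping is to sort these contributions. The term $l=0$ produces $i!\,\pi^\perp g_i(z_\al)$ (the analogue of the $F_i$ term in \eqref{smoothyi}). Within $l=i$ (where $g_{i-l}=g_0$) the top-order coefficient $\be_i$ appears only through the tuple $c_i=1$, $c_j=0$ for $j<i$, contributing $\p_b\pi^\perp g_0(z_\al)\be_i=\Delta_\al\be_i$; all remaining $l=i$ tuples have $c_i=0$ and are therefore precisely the $(i-1)$-tuples of $S'_i$ summing to $i$, reproducing the first (term A) sum of \eqref{y} with $g_0$, while the range $1\leq l\leq i-1$ reproduces the second (term B) sum with $g_{i-l}$. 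Collecting everything and solving the resulting linear relation for $\be_i$, which is legitimate since $\det\Delta_\al\neq0$ by hypothesis $(i)$, gives exactly the right-hand side of \eqref{y} evaluated at $(\be_1,\dots,\be_{i-1})$; invoking the inductive hypothesis $\be_j=\gamma_j$ for $j<i$ then yields $\be_i=\gamma_i(\al)$, closing the induction.

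The step I expect to be the main obstacle is the combinatorial matching: confirming that the two nested expansions, Leibniz in the explicit powers $\e^j$ and Fa\`a di Bruno in $b(\e)$, combine so that the weights collapse to the single factor $1/(c_1!c_2!2!^{c_2}\cdots c_l!l!^{c_l})$, and that the restriction $c_i=0$ of the $l=i$ tuples is in exact weight-preserving bijection with the index set $S'_i$ (with $I'=L$ on those tuples). Everything else is routine once this identification is in place; in particular, no estimate is needed here, as the claim concerns only the exact Taylor coefficients of $\ov\be$ in $\e$.
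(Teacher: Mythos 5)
Your proof is correct and follows essentially the same route as the paper's: differentiate the implicit identity $\pi^{\perp}g(\al,\ov{\be}(\al,\e),\e)\equiv 0$ at $\e=0$, apply Fa\`{a} di Bruno's formula to each composite $\pi^{\perp}g_{i-l}(\al,\ov{\be}(\al,\e))$, identify the $l=i$ tuples with $c_i=0$ as exactly the set $S'_i$ (isolating the single top-order term $\frac{1}{i!}\Delta_\al\,\p_\e^i\ov{\be}(\al,0)$), and solve for the $i$-th derivative using $\det(\Delta_\al)\neq 0$ and the induction hypothesis. The only cosmetic difference is that you extract the order-$i$ coefficient via the Leibniz rule applied to the products $\e^j\,\pi^{\perp}g_j(\al,\ov{\be}(\al,\e))$, whereas the paper Taylor-expands each composite and collects the coefficient of $\e^i$; both computations yield the identical relation.
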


Firstly, it is easy to check that
$(\p\ov{\be}/\p\e)(\al,0)=\gamma_1(\al)$. Now, for some fixed $i\in\{1,2,\ldots,k\},$ we assume by induction hypothesis that  $(\p^s\ov{\be}/\p\e^s)(\al,0)=\gamma_s(\al)$  for $s=1,\dots,i-1$. In what
follows we prove the claim for $s=i$. Consider
$$
\pi^{\perp}g(\al,\ov{\be}(\al,\e),\e)=\sum_{i=0}^k \e^i
\pi^{\perp}g_i(\al,\ov{\be}(\al,\e))+\CO(\e^{k+1})=0.
$$
Expanding each function $\e \mapsto \pi^\perp g_i\big(\al,
\ov{\beta}(\al,\e) \big)$ in Taylor series we obtain

\begin{align}\label{c10}
\pi^{\perp}g(\al,\ov{\be}(\al,\e),\e)=&\sum_{i=0}^k\left(\e^i
\sum_{l=0}^i \dfrac{1}{l!}\dfrac{\partial^l}{\partial
\e^l}\pi^{\perp}g_{i-l}\left(\al,\ov{\be}(\al,\e)\right)\Big|_{\e=0}\right)
\\
&+\CO(\e^{k+1})=0 \nonumber.
\end{align}
Applying the the Fa\`{a} di Bruno's formula we obtain
\begin{align}\label{c12}
\dfrac{\partial^l}{\partial
\e^l}\pi^{\perp}g_{i-l}\left(\al,\ov{\be}(\al,\e)\right)\Big|_{\e=0}=
&\sum_{S_l}\left(\dfrac{l!}{b_1!\,b_2!2!^{b_2}\cdots
b_l!l!^{b_l}}\partial_b^L \pi^{\perp}g_{i-l}\left(\al,\ov{\be}(\al,0)\right)\right.\\
&\left.\bigodot_{j=1}^l \dfrac{\partial^j}{\partial
\e^j}\ov{\be}(\al,0)^{b_j}\right). \nonumber
\end{align}
Substituting \eqref{c12} in \eqref{c10} we get
\begin{align}
\pi^{\perp}g(\al,\ov{\be}(\al,\e),\e)=&\sum_{i=0}^k \e^i\left(
\sum_{l=0}^i \sum_{S_l}\dfrac{1}{b_1!\,b_2!2!^{b_2}\cdots
b_{l}!l!^{b_{l}}}\partial_b^{L}
\pi^{\perp}g_{i-l}\left(\al,\ov{\be}(\al,0)\right)\right.
\nonumber\\ &\left.\bigodot_{j=1}^{l} \dfrac{\partial^j}{\partial
\e^j}\ov{\be}(\al,0)^{b_j} \right) +\CO(\e^{k+1})=0. \nonumber
\end{align}
Since the previous equation is equal to zero for $|\e|$ sufficiently
small, the coefficients of each power of $\e$  vanish. Then  for $0
\leq i \leq k$ and $(\al,\e)\in U\times(-\e_1,\e_1)$ we have
\begin{align*}
&\sum_{l=0}^i \sum_{S_l}\dfrac{1}{b_1!\,b_2!2!^{b_2}\cdots
b_{l}!l!^{b_{l}}}\partial_b^{L}
\pi^{\perp}g_{i-l}\left(\al,\ov{\be}(\al,0)\right)\bigodot_{j=1}^{l}
\dfrac{\partial^j}{\partial \e^j}\ov{\be}(\al,0)^{b_j} =0.
\end{align*}
This equation can be rewritten as
\begin{align}\label{c11}
0=&\sum_{l=0}^{i-1} \sum_{S_l}\dfrac{1}{b_1!\,b_2!2!^{b_2}\cdots
b_{l}!l!^{b_{l}}}\partial_b^{L}
\pi^{\perp}g_{i-l}\left(\al,\ov{\be}(\al,0)\right)\bigodot_{j=1}^{l}
\dfrac{\partial^j}{\partial \e^j}\ov{\be}(\al,0)^{b_j} \nonumber
\\
&+\sum_{S'_i}\dfrac{1}{b_1!\,b_2!2!^{b_2}\cdots
b_{i-1}!(i-1)!^{b_{i-1}}}\partial_b^{I'}
\pi^{\perp}g_{0}\left(\al,\ov{\be}(\al,0)\right)\bigodot_{j=1}^{i-1}
\dfrac{\partial^j}{\partial \e^j}\ov{\be}(\al,0)^{b_j}
\\
&+\dfrac{1}{i!}\partial_b
\pi^{\perp}g_{0}\left(\al,\ov{\be}(\al,0)\right)
\dfrac{\partial^i}{\partial \e^i}\ov{\be}(\al,0). \nonumber
\end{align}
Here $S'_i$ is the set of all  $(i-1)$-tuples of non--negative
integers satisfying $b_1+2b_2+\cdots+(i-1)b_{i-1}=i$,
$I'=b_1+b_2+\cdots+b_{i-1}$. Finally, using the induction hypothesis,
equation \eqref{c11} becomes
\begin{align*}
\dfrac{\p^i\be}{\p \e^i}(\al,0)&=-i!\Delta_\al^{-1}
\left(\sum_{S'_i}\dfrac{1}{b_1!\,b_2!2!^{b_2}\cdots
b_{(i-1)}!(i-1)!^{b_{i-1}}}\p^{I'}_b\pi^{\perp}
g_0(z_\al)\bigodot_{j=1}^{i-1}\gamma_j(\alpha)^{b_s}\right. \\
&\left.+\sum_{l=0}^{i-1}
\sum_{S_l}\dfrac{1}{b_1!\,b_2!2!^{b_2}\cdots
b_{l}!l!^{b_{l}}}\p^{L}_b\pi^{\perp}
g_{i-l}(z_\al)\bigodot_{j=1}^{l}
\gamma_j(\alpha)^{b_s}\right)=\gamma_i(\al).
\end{align*}
This concludes the proof of Claim \ref{c1}.

\begin{claim}\label{c2}
Let $\de:U\times(-\e_1,\e_1)\rightarrow\R^m$ be the $C^{k+1}$
function defined as $\de(\al,\e)= \pi g(\al,\ov{\be}(\al,\e),\e)$.
Then the equality $(\p^i\de/\p\e^i)(\al,0)=i!f_i(\al)$ holds for
$i=1,2,\ldots,k.$
\end{claim}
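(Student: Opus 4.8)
The plan is to repeat verbatim the computation from the proof of Claim \ref{c1}, with the projection $\pi^\perp$ replaced everywhere by $\pi$, and then to feed in the conclusion of Claim \ref{c1} itself. The essential simplification here is that $\de(\al,\e)=\pi g(\al,\ov\be(\al,\e),\e)$ is \emph{not} required to vanish, so there is no recursion to solve: I only need to Taylor expand $\de$ in powers of $\e$ and compare its coefficients termwise against the definition \eqref{fi} of the bifurcation functions $f_i$.

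Concretely, I would first write $\de(\al,\e)=\sum_{i=0}^k\e^i\,\pi g_i(\al,\ov\be(\al,\e))+\CO(\e^{k+1})$ and expand each map $\e\mapsto\pi g_i(\al,\ov\be(\al,\e))$ about $\e=0$, exactly as in \eqref{c10}. Next I would apply the Fa\`a di Bruno formula to every derivative $\partial_\e^l\,\pi g_{i-l}(\al,\ov\be(\al,\e))\big|_{\e=0}$, producing the analogue of \eqref{c12} with $\pi$ in place of $\pi^\perp$; the resulting multilinear terms carry the factors $\partial_\e^j\ov\be(\al,0)^{c_j}$. Then, invoking Claim \ref{c1}, I substitute $\partial_\e^j\ov\be(\al,0)=\gamma_j(\al)$ and $\ov\be(\al,0)=\be(\al)$, so that $(\al,\ov\be(\al,0))=z_\al$. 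Reading off the coefficient of $\e^i$ gives
\[
\frac{1}{i!}\frac{\partial^i\de}{\partial\e^i}(\al,0)=\sum_{l=0}^{i}\sum_{S_l}\frac{1}{c_1!\,c_2!2!^{c_2}\cdots c_l!l!^{c_l}}\partial_b^L\pi g_{i-l}(z_\al)\bigodot_{j=1}^{l}\gamma_j(\al)^{c_j}.
\]

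The only delicate point — and the step I expect to be the main obstacle — is the correct handling of the $l=0$ summand and its match with \eqref{fi}. Using the convention $\partial^0=\mathrm{id}$ together with the empty product, the $l=0$ term collapses to $\pi g_i(z_\al)$; splitting it off leaves exactly the sum over $l=1,\dots,i$ appearing in \eqref{fi}, so the right-hand side above equals $f_i(\al)$. This yields $(\partial^i\de/\partial\e^i)(\al,0)=i!\,f_i(\al)$ for $i=1,\dots,k$, as claimed. No analytic estimates are involved: the entire content is the combinatorial bookkeeping of matching the Fa\`a di Bruno multi-indices to the structure of \eqref{fi}, a matching already validated in Claim \ref{c1} for the $\pi^\perp$ component.
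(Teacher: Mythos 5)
Your proposal is correct and follows essentially the same route as the paper's proof: expand $\de$ in powers of $\e$, apply the Fa\`a di Bruno formula to the derivatives $\p_\e^l\,\pi g_{i-l}(\al,\ov\be(\al,\e))\big|_{\e=0}$, substitute $\p_\e^j\ov\be(\al,0)=\ga_j(\al)$ from Claim \ref{c1}, and match the result with the definition \eqref{fi}, the $l=0$ term giving $\pi g_i(z_\al)$. The only cosmetic difference is that the paper computes $\p_\e^i\de(\al,0)$ directly via the Leibniz rule on the products $\e^j\,\pi g_j(\al,\ov\be(\al,\e))$, whereas you read off the coefficient of $\e^i$ in the Taylor expansion; the bookkeeping is identical.
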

From \eqref{gz} the function $\delta$ reads
\[
\de(\al,\e)=\sum_{j=0}^k \e^j \pi g_j(\al,\ov{\be}(\al,\e))+\CO(\e^{k+1}).
\]
So computing its $i$th-derivative, $0\leq i \leq k$, in the
variable $\e$, we get
$$
\dfrac{\p^i\de}{\p \e^i}(\al,\e)=\sum_{j=0}^i
\sum^i_{q=0}\binom{i}{q}(\e^j)^{(i-q)} \dfrac{\p^q\pi g_j}{\p
\e^q}(\al,\ov{\be}(\al,\e))+\CO(\e).
$$
Taking $\e=0$ and $l=i-j$ we obtain
$$
\dfrac{\p^i\de}{\p \e^i}(\al,0)=\left.\sum_{l=1}^i\dfrac{i!}{l!}
\dfrac{\p^l\pi g_{i-l}}{\p
\e^l}(\al,\ov{\be}(\al,\e))\right|_{\e=0}+i!\pi g_i(z_\al).
$$
Finally using the Fa\`{a} di Brunno's formula and Claim \ref{c1} we have
\begin{align*}
\dfrac{\p^i\de}{\p \e^i}(\al,0)&=\sum_{l=1}^i\dfrac{i!}{l!}\sum_{S_l}
\dfrac{l!}{c_1!c_2!2!^{c_2}\dots c_l!l!^{c_l}}\p^L_b\pi
g_{i-l}(z_\al)\bigodot_{s=1}^{l}\gamma_s(\al)^{c_s}+i!\pi
g_i(z_\al)\\
&=i!f_i(\al).
\end{align*}
This concludes the proof of Claim 2.

\medskip

Using Claim \ref{c2} the function $\de(\al,\e)$ can be expanded in
power series of $\e$ as
\begin{equation*}
\de(\al,\e)=\sum_{i=0}^k\dfrac{\e^i}{i!}\dfrac{\p^i\de}{\p
\e^i}(\al,0)+\CO(\e^{k+1})=\CF^k(\al,\e)+\CO(\e^{k+1}),
\end{equation*}
and, from hypothesis $(ii)$, we have
\begin{equation}\label{tsdelta}
\widetilde{\de}(\al,\e):=\dfrac{\de(\al,\e)}{\e^{r}}=
\CG^k(\al,\e)+\CO(\e^{k-r+1}),
\end{equation}
where $\CG^k(\al,\e)=f_r(\al)+\e
f_{r+1}(\al)+\ldots+\e^{k-r}f_{k}(\al).$ Obviously the equations
$\de(\al,\e)=0$ and $\widetilde{\de}(\al,\e)=0$ are equivalent for
$\e\neq0$.

\smallskip

Denote $R(\e_0)=\max\{|\widetilde{\delta}(\al,\e)-
\CG^k(\al,\e)|:\,(\al,\e)\in\cl(V)\times [-\e_0,\e_0]\}$. From the
continuity of the functions $\widetilde{\delta}$ and $\CG^k$ and
from the compactness of the set $\cl(V)\times [-\e_0,\e_0]$ we know that
$R(\e_0)<\infty$ and $R(0)=0$. In order to study the zeros of
$\widetilde{\de}(\al,\e)$ we use Lemma \ref{NNL},  for proving the
following claim.

\begin{claim}\label{c3}
Consider $a_\e\in V$ as given in hypothesis $(iii)$ and
$\e\in[-\e_0,\e_0]$. Then, there exist $\e_0>0$ sufficiently small and, for each
$\e\in[-\e_0,\e_0]$, a neighborhood $V_\e \subset V$ of
$a_\e$  such that $|\CG^k(\overline{\al},\e)|> R(\e_0) |\e^{k-r+1}|$ for
all $\overline{\al} \in \partial V_\e.$ Moreover
$V_\e=B(a_\e,Q|\e|^{k+1-l})$ for some $Q>0$.
\end{claim}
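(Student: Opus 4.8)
The plan is to exhibit an explicit radius $Q|\e|^{k+1-l}$ for the ball $V_\e$ and then verify the claimed lower bound on $\partial V_\e$ using a Taylor expansion of $\CG^k$ around the center $a_\e$, together with hypothesis $(iv)$. First I would recall from hypothesis $(iii)$ that $\CF^k(a_\e,\e)=0$, and observe that $\CF^k(\al,\e)=\e^r\CG^k(\al,\e)$ so that $\CG^k(a_\e,\e)=0$ as well. Thus $a_\e$ is a zero of $\CG^k(\cdot,\e)$, and the function we must bound from below on the sphere $\partial V_\e$ vanishes at its center. The natural move is to write, for $\ov\al\in\partial V_\e$ with $\ov\al=a_\e+v$ and $|v|=Q|\e|^{k+1-l}$,
\begin{equation*}
\CG^k(\ov\al,\e)=\CG^k(a_\e,\e)+\p_\al\CG^k(a_\e,\e)\cdot v+\CO(|v|^2)
=\p_\al\CG^k(a_\e,\e)\cdot v+\CO(|v|^2).
\end{equation*}

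Next I would convert the derivative bound in hypothesis $(iv)$, which is stated for $\CF^k$, into a bound for $\CG^k$. Since $\p_\al\CF^k=\e^r\p_\al\CG^k$, hypothesis $(iv)$ gives $|\e^r\,\p_\al\CG^k(a_\e,\e)\cdot\al|\geq P_0|\e|^l|\al|$, i.e.
\begin{equation*}
\big|\p_\al\CG^k(a_\e,\e)\cdot\al\big|\geq P_0|\e|^{l-r}|\al|,\quad \al\in V.
\end{equation*}
Applying this with $\al=v$ yields $|\p_\al\CG^k(a_\e,\e)\cdot v|\geq P_0|\e|^{l-r}|v|=P_0 Q|\e|^{l-r}|\e|^{k+1-l}=P_0Q|\e|^{k+1-r}$. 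The quadratic remainder is of order $|v|^2=Q^2|\e|^{2(k+1-l)}$, and since $l\leq(k+r+1)/2$ implies $2(k+1-l)\geq k+1-r+1>k+1-r$, this remainder is dominated by the linear term for $|\e|$ small. Hence there is a constant $c>0$ with $|\CG^k(\ov\al,\e)|\geq cQ|\e|^{k+1-r}$ on $\partial V_\e$.

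Finally I would compare this with the required right-hand side $R(\e_0)|\e|^{k-r+1}$. The exponents match, $k+1-r=k-r+1$, so the inequality $cQ|\e|^{k+1-r}>R(\e_0)|\e|^{k-r+1}$ reduces to $cQ>R(\e_0)$. Since $R(\e_0)\to R(0)=0$ as $\e_0\to0$, I would fix $Q$ first (depending only on $P_0$, $c$, and an upper bound for $R$ on a fixed initial interval) and then shrink $\e_0$ so that $R(\e_0)<cQ$; this also guarantees $V_\e=B(a_\e,Q|\e|^{k+1-l})\subset V$ for $|\e|$ small, since its radius tends to $0$ and $a_\e\in V$. The main obstacle I anticipate is bookkeeping the exponents of $\e$ correctly — in particular checking that the constraint $l\leq(k+r+1)/2$ is exactly what makes the quadratic Taylor remainder negligible against the linear term, and ensuring the linear lower bound from $(iv)$ survives the division by $\e^r$ uniformly in $\e$. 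A secondary subtlety is that hypothesis $(iv)$ bounds the directional derivative only for $\al\in V$, so one must confirm that the displacement vector $v$ stays in the relevant set; this is immediate because $|v|\to0$ and $a_\e\in V$, so $\ov\al$ lies in a fixed neighborhood inside $V$ for $\e_0$ small enough.
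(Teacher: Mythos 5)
Your overall strategy coincides with the paper's proof: Taylor-expand $\CG^k$ at $a_\e$ (using $\CG^k(a_\e,\e)=0$), divide hypothesis $(iv)$ by $|\e|^{r}$ to get $|\p_\al\CG^k(a_\e,\e)\cdot\al|\geq P_0|\e|^{l-r}|\al|$, bound $\CG^k$ from below on $\partial V_\e$, and finally compare with $R(\e_0)|\e|^{k-r+1}$ using $R(0)=0$ to shrink $\e_0$. However, there is a genuine gap in your treatment of the quadratic remainder. You claim that $l\leq(k+r+1)/2$ implies $2(k+1-l)\geq k+2-r>k+1-r$, and conclude that the remainder $\CO(|v|^2)=\CO\big(Q^2|\e|^{2(k+1-l)}\big)$ is dominated by the linear term $P_0Q|\e|^{k+1-r}$ once $|\e|$ is small. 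The correct consequence of the hypothesis is only $2(k+1-l)\geq k+1-r$, with equality precisely when $2l=k+r+1$; your strict inequality would require $l\leq(k+r)/2$. The equality case is not vacuous: it is allowed by hypothesis $(iv)$ and actually occurs in the paper's own application (Proposition \ref{pex1}, where $k=2$, $r=1$, $l=2$, so $(k+r+1)/2=2=l$). In that case the remainder is of \emph{exactly} the same order $|\e|^{k+1-r}$ as the linear term, so no amount of shrinking $\e$ makes it negligible; the constant $c$ in your bound $|\CG^k(\ov\al,\e)|\geq cQ|\e|^{k+1-r}$ is really $c=P_0-c_0Q$, which depends on $Q$ and is negative when $Q$ is large. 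Your plan then fixes $Q$ "depending on $P_0$, $c$, and $R$", which is circular, since $c$ itself depends on $Q$.

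The repair uses smallness of $Q$, not of $\e$, and it is exactly what the paper does: bound the remainder uniformly by $c_0Q^2|\e|^{k+1-r}$, with $c_0$ independent of $\e$ and $Q$ (here the inequality $k+r+1-2l\geq0$ is used only to absorb $|\e|^{k+r+1-2l}$ into a constant), so that $|\CG^k(a_\e+h_\e,\e)|\geq(P_0-c_0Q)Q\,|\e|^{k+1-r}$ on $\partial V_\e$. Then observe that the polynomial $\mathcal{P}(Q)=(P_0-c_0Q)Q$ is positive exactly for $0<Q<P_0/c_0$ and is maximal at $Q^*=P_0/(2c_0)$, with $\mathcal{P}(Q^*)=P_0^2/(4c_0)$; take $Q=Q^*$ and finally choose $\e_0$ so small that $R(\e_0)<P_0^2/(4c_0)$. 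With this constraint on the admissible radii made explicit, the rest of your argument (including the remark that $\ov\al$ stays in $V$ because the radius tends to $0$) goes through and recovers the claim.
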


The parameter $\e_0>0$ will be chosen later on. Given $\e\in[-\e_0,\e_0]$, since $\CG^k(\al,\e)$ is a $\CC^{k+1}$ function, $k\geq 1$, we have that
\begin{equation}\label{Taylor}
\CG^k(a_\e +h,\e)= \partial_\al \CG^k(a_\e ,\e)h+\rho(h),\quad \rho(h)=\CO(|h|^2),
\end{equation}
for every $h \in\R^m$ such that $[a_\e,\, a_\e+h]\subset V$. Moreover, hypotheses  $(ii)$ and $(iv)$
imply that
\begin{equation}\label{G}
\left|\p_\al\CG^k(a_\e,\e)\cdot \al\right|\geq P_0|\e|^{l-r} |\al|
\quad \text{for} \quad \al\in V.
\end{equation}
Combining expressions \eqref{Taylor} and \eqref{G} we obtain the following inequality:
\begin{align}\label{d1}
| \CG^k(a_\e+h ,\e)|\geq \left(P_0 -|\e|^{r-l}\dfrac{|\rho(h)|}{|h|}
\right)|\e|^{l-r}|h|.
\end{align}

Take
$V_\e=B(a_\e,Q|\e|^{k+1-l})\subset V$. A point
$\overline{\al}_{\e}\in \partial V_\e$ reads
$\overline{\al}_{\e}=a_\e+h_{\e}$, where $h_{\e}=u Q|\e|^{k+1-l} \in
\mathbb{R}^m$ and $|u|=1$.  Moreover, since $\rho(h)=\CO(|h|^2)$ we get
\[
|\e|^{r-l}\dfrac{|\rho(h_{\e})|}{|h_{\e}|}
=|\e|^{r-l}\CO\big(Q|\e|^{k+1-l}\big)=\CO\big(Q|\e|^{k+r+1-2l}\big).
\]
From hypothesis $(iii),$ $k+r+1-2l\geq0$. So, in particular,
$\CO\big(Q|\e|^{k+r+1-2l}\big)=\CO(Q)$. Thus, from definition of the symbol $\CO$, there exists $c_0>0$, which does not depend on $\e$ and $Q$,
such that $|\e|^{r-l}|\rho(h_{\e})|/|h_{\e}|\leq c_0 Q.$ So the inequality \eqref{d1} reads
\begin{equation*}
| \CG^k(a_\e+h_\e ,\e)|\geq \left(P_0 - Q c_0 \right)Q|\e|^{k-r+1}.
\end{equation*}
Note that the polynomial $\mathcal{P}(Q)=\left(P_0 - Q c_0 \right)Q$ is positive for $0<Q<P_0/c_0$ and reach its maximum at $Q^*=P_0/(2c_0)$. Moreover $\mathcal{P}(Q^*)=P_0^2/(4c_0)$. Since $R(0)=0$, there exists $\e_0>0$ small enough in order that $R(\e_0)< P_0^2/(4c_0)=\mathcal{P}(Q^*)$. Accordingly, taking $Q=Q^*$ it follows that $|\CG^k(\overline{\al},\e)|> R(\e_0) |\e^{k-r+1}|$ for all $\overline{\al}
\in \partial V_\e$ and $\e\in[-\e_0,\e_0]$. This concludes the proof of Claim.
\smallskip

Applying Lemma \ref{NNL} for $g=\widetilde{\delta}$, as defined in \eqref{tsdelta}, $\kappa=k-r$, and $V_\e=B(a_\e,Q|\e|^{k+1-l})$ we conclude that $d_B\big(\widetilde{\delta}(\cdot,
\e),V_\e,0 \big)=d_B\big(\CG^k(\cdot, \e),V_\e,0 \big) \neq 0$.
Finally, denoting $z(\e)=\big(\al(\e),\ov{\be}\big(\al(\e),$
$\e\big)\big)$ it follows that $g(z(\e ),\e )=0$.

\smallskip

Moreover, let $z_{a_\e }=\left(a_\e, \beta(a_\e) \right)$, then  $
\vert\pi z(\e)- \pi z_{a_\e } \vert= | \al(\e)-a_\e |=\CO\left(
\e^{k+1-l} \right)$ and, since $\ov{\beta}$ is Lipschtiz,
\[
\left\vert
\pi^\perp z(\e)- \pi^\perp z_{a_\e } \right\vert=\left\vert
\ov{\beta}(\al(\e),\e)-\ov{\beta}(a_\e,0) \right\vert\leq L|(\al(\e),\e)-(a_{\e},0)|=\CO(\e).
\]
This
concludes the proof of Theorem \ref{LSt1}.
\end{proof}

\begin{proof}[Proof of Corollary \ref{ct1}]
The basic idea of the proof is to show that $\CF^k(\al)$ satisfies
all the hypotheses of Theorem \ref{LSt1}. From hypotheses,
$\CF^k(\al,\e)=\e^k f_k(\al)$ and
$Df_k(\al^*)=\e^{-k}\partial_\al\CF^k(\al^*,\e)$ is a
homeomorphism on $\mathbb{R}^n$. Thus there exist constants $b,c>0$
such that
\[
b|\al|<\left\vert Jf_k(\al^*) . \al \right\vert=\left\vert
\dfrac{1}{\e^k}\partial_\al\CF^k(\al^*,\e).\al\right\vert<c|\al|,
\]
for all $\al \in \mathbb{R}^m$. Therefore $b\left\vert\e^k \right\vert|\al|<\left\vert
\partial_\al\CF^k(\al^*,\e).\al\right\vert<c\left\vert\e^k
\right\vert|\al|$, which implies that $\CF^k(\al^*)$ satisfies  hypothesis $(iii)$ of Theorem \ref{LSt1}, with $l=r=k$. Indeed $(k+r+1)/2=k+1/2>k=l$.  Hence the proof follows directly
from Theorem  \ref{LSt1}.
\end{proof}

\section{Proof of Theorem \ref{PSt1} and Collorary \ref{ct1a}}\label{proof2}

The next result is needed in the proof of Theorem \ref{PSt1}.

\begin{lemma}[Fundamental Lemma]\label{l1}
Let $x(t,z,\e)$ be the solution of the $T$-periodic $\CC^{k+1}$ differential system \eqref{s1} such that
$x(0,z,\e)=z$. Then the equality
\[
x(t,z,\e)=x(t,z,0)+\sum_{i=1}^{k}\e^{i}\dfrac{y_i(t,z)}{i!}+\CO(\e^{k+1})
\]
holds for $(t,z)\in\s^1\times\D$.
\end{lemma}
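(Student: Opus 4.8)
The plan is to identify the $\e$--Taylor coefficients of the solution with the functions $y_i$. Since $F$ is $\CC^{k+1}$, the standard theory of smooth dependence of solutions on parameters guarantees that $\e\mapsto x(t,z,\e)$ is $\CC^{k+1}$ on $[-\e_0,\e_0]$, uniformly for $(t,z)$ in compact subsets of $\s^1\times\D$; hence Taylor's theorem with Lagrange remainder gives
\[
x(t,z,\e)=\sum_{i=0}^k\e^i\,\frac{x_i(t,z)}{i!}+\CO(\e^{k+1}),\qquad x_i(t,z):=\frac{\p^i x}{\p\e^i}(t,z,0).
\]
Because $x(0,z,\e)=z$ is independent of $\e$, we have $x_0(t,z)=x(t,z,0)$ and $x_i(0,z)=0$ for $i\geq 1$. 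The lemma thus reduces to proving $x_i(t,z)=y_i(t,z)$ for $i=1,\dots,k$, which I would establish by induction on $i$, the base case $i=1$ being the classical first variation.

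For the inductive step I would differentiate the identity $\p_t x(t,z,\e)=F(t,x(t,z,\e),\e)$ exactly $i$ times in $\e$ and set $\e=0$, so that $x_i'(t,z)=\p_\e^i F(t,x(t,z,\e),\e)|_{\e=0}$ (here $'$ is $\p_t$). Writing $F(t,x,\e)=\sum_{m=0}^k\e^m F_m(t,x)+\CO(\e^{k+1})$, the Leibniz rule applied to each product $\e^m F_m(t,x(t,z,\e))$ leaves at $\e=0$ only the summand with $l:=i-m$ derivatives falling on the composite factor, with weight $\binom{i}{l}m!=i!/l!$. Faà di Bruno's formula then expands $\p_\e^l F_{i-l}(t,x(t,z,\e))|_{\e=0}$ via the Frechet derivatives $\p^L F_{i-l}(t,x_0)$ and the lower coefficients $x_j$, $j\leq l$, contributing the factor $l!/(b_1!\,b_2!2!^{b_2}\cdots b_l!l!^{b_l})$; the $l!$ cancels, yielding
\[
x_i'(t,z)=i!\sum_{l=0}^{i}\sum_{S_l}\frac{1}{b_1!\,b_2!2!^{b_2}\cdots b_l!l!^{b_l}}\p^L F_{i-l}(t,x_0(t,z))\bigodot_{j=1}^l x_j(t,z)^{b_j}.
\]

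The crucial observation is that exactly one summand is linear in the top coefficient $x_i$, namely $l=i$ with $b_i=1$ and all other $b_j=0$, which contributes $\p_x F_0(t,x_0(t,z))\,x_i(t,z)$. Peeling this off recasts the identity as the linear inhomogeneous variational equation $x_i'=\p_x F_0(t,x(t,z,0))\,x_i+G_i$. I would then recognize $G_i$ as $i!$ times the integrand bracket of \eqref{smoothyi}: splitting the residual sum into $l=0$ (giving $F_i(t,x_0)$), $1\leq l\leq i-1$ (the double sum over $S_l$), and $l=i$ with $b_i=0$ (whose constraint $\sum_{j<i} j\,b_j=i$ is precisely $S'_i$, with $F_{i-l}=F_0$) matches the three groups of \eqref{smoothyi} verbatim, after replacing $x_j$ by $y_j$ for $j<i$ by the induction hypothesis. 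Solving by variation of constants with the fundamental matrix $Y(t,z)$ of \eqref{vs} and the data $x_i(0,z)=0$ gives $x_i(t,z)=Y(t,z)\int_0^t Y(s,z)^{-1}G_i(s,z)\,ds=y_i(t,z)$, closing the induction.

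The main obstacle is the combinatorial bookkeeping of this differentiation: one must check that Leibniz produces exactly the weight $i!/l!$, that Faà di Bruno contributes the reciprocal factorial weight so the $l!$ cancels, and above all that the single term linear in $x_i$ separates cleanly while the leftover $l=i$ terms reindex exactly as $S'_i$. Once this matching is secured, the remaining arguments (smoothness, zero initial data, variation of constants) are routine.
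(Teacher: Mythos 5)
Your proposal is correct and takes essentially the same route as the paper: both identify the $\e$-Taylor coefficients of the solution, use smooth dependence on parameters plus Fa\'a di Bruno's formula (with the identical $S_l$/$S'_i$ bookkeeping and the same cancellation of $l!$) to show that the $i$-th coefficient solves the linear Cauchy problem $y_i'=\p_x F_0(t,x(t,z,0))\,y_i+B_i(t)$ with $y_i(0,z)=0$, and then solve it by variation of constants with the fundamental matrix $Y(t,z)$. The only cosmetic difference is that you differentiate the differential equation directly and organize the argument as an explicit induction, while the paper performs the equivalent expansion inside the integral equation for the solution; the combinatorics and the conclusion coincide.
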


\begin{proof}
The solution $x(t,z,\e)$ can be written as
\begin{equation}\label{x}
\begin{array}{L}
x(t,z,\e)=z+\sum_{i=0}^k\e^i\int_{0}^tF_i(s,x(s,z,\e))ds+\CO(\e^{k+1}),
\quad \textrm{and}\vspace{0.2cm}\\ x(t,z,0)=z+\int_{0}^t
F_0(s,x(s,z,0))ds.
\end{array}
\end{equation}
Moreover the result about differentiable dependence on parameters
implies that $\e\mapsto x(t,z,\e)$ is a $\CC^{k+1}$ map. So, for
$i=0,1,\ldots,k-1$, we compute the Taylor expansion of
$F_i(t,x(t,z,\e))$ around $\e=0$ as
\begin{equation}\label{exp1}
F_i(t,x(t,z,\e))=F_i\left(t,x(t,z,0)\right)+\sum_{l=1}^{k-i}\dfrac{\e^l}{l!}
\left(\dfrac{\p^l}{\p
\e^l}F_i(t,x(t,z,\e))\right)\Bigg|_{\e=0}+\CO(\e^{k-i+1}).
\end{equation}
Using the Fa\'{a} di Bruno's formula to compute the
$l$--derivatives of $F_i(t,x(t,z,\e))$ in the variable $\e$ we get
\begin{equation}\label{exp2}
\begin{array}{RL}
\dfrac{\p^l}{\p
\e^l}F_i(t,x(t,z,\e))\Bigg|_{\e=0}\!\!\!\!=\sum_{S_l}\dfrac{l!}{b_1!\,b_2!
2!^{b_2}\cdots b_l!l!^{b_l}}\p^L
F_i(t,x(t,z,0)) \bigodot_{j=1}^ly_j(t,z)^{b_j},
\end{array}
\end{equation}
where
\begin{equation}\label{yj}
y_j(t,z)=\left(\dfrac{\p^j}{\p\e^j}x(t,z,\e)\right)\Bigg|_{\e=0}.
\end{equation}
Substituting \eqref{exp2} in \eqref{exp1} the Taylor expansion of $F_i(s,x(t,z,\e))$ becomes
\begin{equation}\label{exp3}
\begin{array}{RL}
F_i(s,x(s,z,\e))=&F_i\left(s,x(s,z,0)\right)\\
&+\sum_{l=1}^{k-i}\sum_{S_l}\dfrac{\e^l}{b_1!\,b_2!2!^{b_2}\cdots
b_l!l!^{b_l}}\p^L
F_i\left(s,x(s,z,0)\right)\\
&\bigodot_{j=1}^ly_j(s,z)^{b_j}+\CO(\e^{k-i+1}),
\end{array}
\end{equation}
for $i=0,1,\ldots,k-1$. Furthermore, for $i=k$,
\begin{equation}\label{expk}
F_k(s,x(s,z,\e))=F_k\left(s,x(s,z,0)\right)+\CO(\e).
\end{equation}
From \eqref{x}, \eqref{exp3}, and \eqref{expk}, we get the following
equation:
\begin{equation}\label{exp4}
x(t,z,\e)=z+Q(t,z,\e) +
\sum_{i=0}^{k}\e^i\int_{0}^tF_i(s,x(s,z,0))ds+\CO(\e^{k+1}),
\end{equation}
where
\[
Q(t,z,\e)=\sum_{i=1}^{k-1}\e^{i} \sum_{l=1}^{i}\sum_{S_l}\int_{0}^t\dfrac{1}
{b_1!\,b_2!2!^{b_2}\cdots b_l!l!^{b_l}}\p^LF_{i-l}(s,x(s,z,0))
\bigodot_{j=1}^ly_j(s,z)^{b_j}ds.
\]

Finally, from \eqref{exp4}
\[
\begin{array}{RL}
x(t,z,\e)=&z+\int_{0}^tF_0(t,x(s,z,0))ds+\sum_{i=1}^{k-1}\e^{i}\Bigg(\int_{0}^t
F_i(s,x(s,z,0))\\
&+\sum_{l=1}^{i}\sum_{S_l}\dfrac{1}{b_1!\,b_2!2!^{b_2}\cdots
b_l!l!^{b_l}}\p^LF_{i-l}(s,x(s,z,0))\bigodot_{j=1}^ly_j(s,z)^{b_j}\,ds\Bigg)\\
&+\e^{k}\int_{0}^tF_k\left(s,x(s,z,0)\right)+\e^{k+1}
\int_{0}^tR(s,x(s,z,\e),\e)ds+\CO(\e^{k+1}).
\end{array}
\]

Now using this last expression of $x(t,z,\e)$ we conclude that
functions $y_i(t,z),$ defined in \eqref{yj} for $i=1,2,\ldots,k-1$,
can be computed recurrently from the following integral equation
\begin{equation}\label{ie}
\begin{array}{RL}
y_i(t,z)=&\left(\dfrac{\p^i x}{\p\e^i}(t,z,\e)\right)\Bigg|_{\e=0}\\
=&i!\int_{0}^t\Bigg(F_i(s,x(s,z,0)) +\sum_{l=1}^{i}\sum_{S_l}
\dfrac{1}{b_1!\,b_2!2!^{b_2}\cdots
b_l!l!^{b_l}}\\
&\cdot\p^LF_{i-l}(s,x(s,z,0))\bigodot_{j=1}^ly_j(s,z)^{b_j}\Bigg)ds\\
=&\int_{0}^t\left(A(s)y_i(s,z)+B_i(s)\right)ds,
\end{array}
\end{equation}
where
\[
\begin{array}{RL}
A(s)=&\p F_{0}(s,x(s,z,0)),\\
B_i(s)=&i!\Big(F_i(s,x(s,z,0))
+\sum_{S'_{i}}\dfrac{1}{b_1!\,b_2!2!^{b_2}\cdots
b_{i-1}!(i-1)!^{b_{i-1}}}\p^{I'}F_{0}(s,x(s,z,0))\\
&\bigodot_{j=1}^{i-1}y_j(s,z)^{b_j}+\sum_{l=1}^{i-1}\sum_{S_l}
\dfrac{1}{b_1!\,b_2!2!^{b_2}\cdots
b_l!l!^{b_l}}\p^LF_{i-l}(s,x(s,z,0))\\
&\bigodot_{j=1}^ly_j(s,z)^{b_j}\Big).
\end{array}\]

The integral equation \eqref{ie} is equivalent to the Cauchy problem
\[
\dfrac{\p}{\p t}y_i(t,z)=A(t)y_i(t,z)+B_i(t), \quad \text{with}
\quad y_i(0,z)=0,
\]
which has a unique solution given by
\[
\begin{array}{RL}
y_i(t,z)=&Y(t,z)\int_{0}^tY(s,z)^{-1}B_i(s)ds\\
=&i!\,Y(t,z)\int_{0}^tY(s,z)^{-1}\Bigg(F_i(s,x(s,z,0)) \\
&+\sum_{S'_{i}}\dfrac{1}{b_1!\,b_2!2!^{b_2}\cdots
b_i!i!^{b_i}}\p^{I'}F_{0}(s,x(s,z,0))\bigodot_{j=1}^{i-1}y_j(s,z)^{b_j}\\
&+\sum_{l=1}^{i-1}\sum_{S_l} \dfrac{1}{b_1!\,b_2!2!^{b_2}\cdots
b_l!l!^{b_l}}\p^LF_{i-l}(s,x(s,z,0))
\bigodot_{j=1}^ly_j(s,z)^{b_j}\Bigg)ds.
\end{array}
\]

Since
\[
x(t,z,0)=z+\int_{0}^tF_0(t,x(s,z,0))ds,
\]
we obtain
\[
x(t,z,\e)=x(t,z,0)+\sum_{i=1}^{k}\e^{i}\dfrac{y_i(t,z)}{i!}+\CO(\e^{k+1}).
\]
This concludes the proof of the lemma
\end{proof}

\begin{proof}[Proof of Theorem \ref{PSt1}]
Let $x(\cdot,z,\e):[0,t_{(z,\e)})\mapsto\mathbb{R}^n$  denote the
solution of system \eqref{s1} such that $x(0,z,\e)=z$. By Theorem
$8.3$ of \cite{Am} there exists a neighborhood $U$ of $z$ and $\e_1$
sufficiently small such that $t_{(z,\e)}>T$ for all $(z,\e)\in U\times
(-\e_1,\e_1)$. Let $h(z,\e):U\times (-\e_1,\e_1) \mapsto \mathbb{R}^n$ be the {\it displacement function}
 defined as
\begin{equation}\label{df}
h(z,\e)=x(T,z,\e)-z.
\end{equation}
Clearly $x(\cdot,\ov{z},\ov{\e})$,  for some $(\ov{z},\ov{\e})\in U\times
(-\e_1,\e_1)$, is a $T$-periodic solution of
system \eqref{s1} if and only if $h(\ov{z},\ov{\e})=0$. Studying the
zeros of \eqref{df} is equivalent to study the zeros of
\begin{equation}\label{zdf}
g(z,\e)=Y(T,z)^{-1}h(z,\e).
\end{equation}
From  Lemma \ref{l1} we have
\begin{equation}\label{aveq}
x(t,z,\e)=x(t,z,0)+\sum_{i=1}^{k}\e^{i}\dfrac{y_i(t,z)}{i!}+\CO(\e^{k+1}).
\end{equation}
 for all $(t,z)\in\s^1\times\D$, where $y_i$ is defined in
\eqref{smoothyi}. Hence substituting \eqref{aveq} into \eqref{zdf} it follows that
\begin{equation}\label{proofg}
g(z,\e)=\sum_{i=0}^{k} \e^i g_i(z)+\CO(\e^{k+1}),
\end{equation}
where $g_0(z)=Y^{-1}(t,z)\left(x(t,z,0)-z\right)$ and, for $i=1,2,\ldots,k,$ the function $g_i$ is defined in \eqref{smoothgi}.

From hypothesis (H$_b$) we know that $g_0(z)$ vanishes on the manifold $\CZ$, therefore hypothesis (H$_a$) holds for the function \eqref{proofg}. Moreover
\[
\begin{array}{rl}
\dfrac{\p g_0}{\p z}(z)=&Y(T,z)^{-1}\left(\dfrac{\p x}{\p z}(T,z,0)-Id\right)\vspace{0.2cm}\\
&=Y(T,z)^{-1}\left(Y(T,z)Y(0,z)^{-1}-Id\right)\vspace{0.2cm}\\
&=Y(0,z)^{-1}-Y(T,z)^{-1},
\end{array}
\]
which from hypothesis has its lower right corner $(n-m)\times (n-m)$
matrix  as being a nonsingular matrix $\Delta_{\al}$. Hence the result follows directly by applying Theorem \ref{LSt1}.
\end{proof}

\section{Birth of a limit cycle in a 3D polynomial system}\label{ApTB}

Consider the following 3D
autonomous polynomial differential system
\begin{align}\label{ex1}
\dot{u}=& -v+\e  \left(u^3-u^2-u v^2-\pi  v^3\right), \nonumber\\
\dot{v}=& u+ \e\left(\pi  u^3-1\right), \\
\dot{w}=&w-\e u. \nonumber
\end{align}
In the next proposition, as an application of Theorem \ref{PSt1}, we provide sufficient conditions for the
existence of an isolated periodic solution for the differential system \eqref{ex1}.
\begin{proposition}\label{pex1}
For $|\e|>0$ sufficiently small  system  \eqref{ex1} has an isolated
periodic solution $\varphi(t,\e)=\big(u(t,\e),\,v(t,\e),\,w(t,\e)
\big)$, such that
\begin{align}\label{Sex1}
u(t,\e)=&\sqrt{8 \,\e}\,\cos t +\CO(\e), \nonumber \\
v(t,\e)=&\sqrt{8 \,\e} \,\sin t +\CO(\e), \,\, \text{and}\vspace{0.2cm}\\
w(t,\e)=&\CO(\e).
\nonumber
\end{align}
\end{proposition}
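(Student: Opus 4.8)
The plan is to fit the autonomous system \eqref{ex1} into the hypotheses of Theorem \ref{PSt1} and to compute the averaged functions \eqref{smoothgi} until the first one possessing a simple zero appears. Two reductions come first. Since the solution \eqref{Sex1} collapses onto the origin at the rate $\sqrt\e$, I would blow up a neighbourhood of the origin by the rescaling $(u,v,w)=\sqrt\e\,(x_1,x_2,x_3)$ and use $\mu=\sqrt\e$ as the small parameter, so that the unperturbed circles acquire radius of order one. Moreover, \eqref{ex1} being autonomous, its time-$2\pi$ map cannot be used directly: the period of the cycle is $2\pi+\CO(\e)\neq2\pi$, and the rotational invariance of the unperturbed flow would otherwise produce a whole circle of zeros, incompatible with hypothesis $(iv)$. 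I would therefore introduce polar coordinates $x_1=s\cos\theta$, $x_2=s\sin\theta$ and, using that $\dot\theta=1+\CO(\mu)>0$ on a compact annulus containing the expected radius, take $\theta$ as the new ($2\pi$-periodic) independent variable. This produces a planar system for $(s,x_3)$ with $n=2$, $m=1$, whose unperturbed part $\{ds/d\theta=0,\ dx_3/d\theta=x_3\}$ has the periodic set $\CZ=\{(s,0):s\in V\}$, so that the analogue of (H$_b$) holds and Theorem \ref{PSt1} applies.

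I would then verify the structural hypotheses. The reduced unperturbed variational matrix is $\mathrm{diag}(0,1)$, hence $Y(\theta)=\mathrm{diag}(1,e^\theta)$ and $Y(0)^{-1}-Y(2\pi)^{-1}=\mathrm{diag}(0,1-e^{-2\pi})$, so its lower right corner is $\Delta_\al=1-e^{-2\pi}\neq0$ and hypothesis $(i)$ is satisfied. Because $P=u^3-u^2-uv^2-\pi v^3$ and $Q=\pi u^3-1$ are independent of $w$, the equation for $s$ decouples from $x_3$; consequently the projected bifurcation functions $f_i$ coincide with the scalar averaged functions of $ds/d\theta$, and the coupling terms $\gamma_i$ do not enter them.

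The core of the argument is the computation of these scalar averaged functions. Writing $\dot s=\e R$ and $\dot\theta=1+\e\Theta$ with $R=P\cos\theta+Q\sin\theta$ and $\Theta=(Q\cos\theta-P\sin\theta)/s$, one expands $ds/d\theta=\e R-\e^2 R\Theta+\cdots$ in the original parameter $\e$. The first averaged function $\int_0^{2\pi}R\,d\theta$ is a nonzero multiple of $s^3$, produced by the cubic terms $u^3$ and $-uv^2$ of $P$; it vanishes only at $s=0$, so no cycle is visible at first order. The cycle is a genuinely second-order effect: the next averaged function $\int_0^{2\pi}\big(-R\Theta+\partial_sR\int_0^\theta R\,d\sigma\big)\,d\theta$ carries a leading term linear in $s$, created by the interaction between the constant forcing $-1$ in $Q$ and the quadratic term $-u^2$ in $P$. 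Under the $\sqrt\e$ rescaling these two contributions line up at the same order: all bifurcation functions up to order four vanish and the first nonvanishing one, $f_5(s)$, is the difference of a cubic and a linear term in $s$, whose positive root $s^*$ is simple, $\det\big(Df_5(s^*)\big)\neq0$, and determines the amplitude appearing in \eqref{Sex1}.

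With a simple zero $s^*$ of $f_5$ at hand, Corollary \ref{ct1a} (with $k=5$, so that $r=l=5\leq(k+r+1)/2$) applies and yields an isolated $2\pi$-periodic solution of the reduced planar system with $s=s^*+\CO(\mu)$ and $x_3=\CO(\mu)$, where $\mu=\sqrt\e$. Undoing the angle reduction ($\theta=t+\CO(\mu)$) and the rescaling $(u,v,w)=\sqrt\e\,(x_1,x_2,x_3)$ then gives $u=\sqrt\e\,s^*\cos t+\CO(\e)$, $v=\sqrt\e\,s^*\sin t+\CO(\e)$ and $w=\CO(\e)$, reproducing \eqref{Sex1}; the isolation follows from the simplicity of $s^*$ together with $\Delta_\al\neq0$. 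The main obstacle is precisely this second averaged function: since the first-order function records only the sign-definite cubic growth, the very existence of the cycle rests on correctly evaluating the second-order interaction term $\partial_sR\int_0^\theta R$ alongside $-R\Theta$, and on tracking how the $\sqrt\e$ blow-up defers the balance to fifth order.
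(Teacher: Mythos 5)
Your strategy is viable, but it is genuinely different from the paper's, and the comparison is instructive. The paper never rescales: it keeps $\e$ itself as the perturbation parameter, computes the first two bifurcation functions of the cylindrical system ($f_1(\al)=\pi\al^3/2$ and a quadratic-plus-linear $f_2$), and then applies Theorem \ref{PSt1} in its full strength with $k=2$, $r=1$, $l=2$ (the borderline case $l=(k+r+1)/2$), the relevant zero of $\CF^2(\al,\e)=\e f_1(\al)+\e^2 f_2(\al)$ being an $\e$-dependent point $a_\e=\CO(\sqrt{\e})$ that degenerates to the boundary of $V$ as $\e\to 0$. This example is precisely the paper's showcase for hypotheses $(iii)$ and $(iv)$, i.e.\ for the situation Corollary \ref{ct1a} cannot handle (the first bifurcation function $\pi\al^3/2$ has no zero with $\al>0$). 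Your $\sqrt{\e}$-blow-up replaces that mechanism by ordinary higher-order averaging with a $\mu$-independent simple zero, so that only Corollary \ref{ct1a} is needed; your structural checks (the fundamental matrix $\mathrm{diag}(1,e^{\theta})$, $\Delta_{\al}=1-e^{-2\pi}\neq 0$, and the observation that the decoupling of the radial equation from $x_3$ kills all $\ga_j$-corrections, so that $f_i=\pi g_i$) are all correct.

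Two gaps remain. The first is an order miscount: since the displacement function rescales as well, $\tilde h(s_0,x_{3,0},\mu)=\mu^{-1}h(\mu s_0,\mu x_{3,0},\mu^2)$, the first non-vanishing bifurcation function of the rescaled system occurs at order $\mu^4$, not $\mu^5$; one finds $f_1=f_2=f_3=0$ and $f_4\neq 0$ (your count of five applies to $\mu\tilde h$, not to $\tilde h$). This is harmless, since Corollary \ref{ct1a} applies with $k=4$ just as well, but it must be fixed. The second gap is the decisive one: the entire quantitative content of Proposition \ref{pex1} is the constant $\sqrt{8}$, and your proposal nowhere computes it; you only assert that the positive root of the (uncomputed) cubic-minus-linear function equals it. The sign and size of the linear term are everything here: for instance, with the paper's printed $f_2(\al)=\pi\al(3\al+4)$ your rescaled leading function would be $\pi s^3/2+4\pi s$, which has no positive root, and the method would detect no periodic orbit at all. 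Carrying out the missing computation for the decoupled radial equation — equivalently, computing $\int_0^{2\pi}\bigl(-R_1\Theta_1+\p_r R_1\int_0^{\theta}R_1\,d\sigma\bigr)d\theta$ for the unrescaled equation, where $\dot r=\e R_1$ and $\dot\theta=1+\e\Theta_1$ — gives the linear coefficient $-2\pi$, hence the leading rescaled bifurcation function $\tfrac{\pi s}{2}(s^2-4)$ with simple root $s^*=2$, i.e.\ amplitude $2\sqrt{\e}$ rather than $\sqrt{8\e}$. (The paper's own displayed data are not mutually consistent at this point: its $y_2(2\pi,z)$, whose first component is $-\pi r(3r+4)/4$, does not yield its stated $f_2$, and its claimed zero $a_\e=\sqrt{9\e^2+8\e}+3\e$ does not annihilate $\e f_1+\e^2 f_2$ for the stated $f_1,f_2$.) So, as written, your proposal establishes at best the existence of an isolated periodic solution of amplitude $c\sqrt{\e}+\CO(\e)$ for some undetermined constant $c>0$; until the averaged function is computed explicitly and the constant reconciled with \eqref{Sex1}, it is not a proof of the stated asymptotics.
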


We emphasize that the expression \eqref{Sex1} is not saying that the period of the solution $\varphi(t,\e)$ is $2\pi$. That is because we cannot assure the period of the order $\e$ functions.

\begin{proof}
Writing the differential system \eqref{ex1} in the cylindrical coordinates $(u,v,w)=(r \cos\theta,r
\sin\theta,w)$ we get
\begin{align}
\dot{r}=&\frac{ \e}{4}  \left(r^3+r^2 (r (\pi  \sin (4 \theta )+2 \cos
(2 \theta )+\cos (4 \theta ))-3 \cos \theta-\cos (3 \theta ))-4 \sin\theta
\right),\nonumber \\
\dot{\theta}=&1+\frac{\e}{4 r}  \left(r^2 (\sin\theta+\sin (3 \theta
)-r \sin (4 \theta )+\pi  r \cos (4 \theta )+3 \pi  r)-4 \cos
\theta\right), \nonumber \\
\dot{w}=&w-\e r  \cos \theta. \nonumber
\end{align}
Since $\dot \T\neq 0$ for $|\e|\neq0$ sufficiently small, we can take $\theta$ as
the new independent variable. So
\begin{equation}\label{nfex1}
\begin{array}{rl}
\dfrac{d r}{d\theta}=&\e F_{11}(\theta, z)+\e^2
F_{21}(\theta,z) +\CO_1(\e^3),\vspace{2mm}\\
\dfrac{d z}{d\theta}=&z+\e F_{12}(\theta, z) +\e^2
F_{22}(\theta, z)+\CO_2(\e^3),\\
\end{array}
\end{equation}
where $z=(r,w)\in \R^2$ and
\begin{align*}
F_{11}(\theta,z)=& \dfrac{1}{4} \big(r^3+r^2 (r (\pi
\sin (4 \theta )+2 \cos (2 \theta )+\cos (4 \theta ))-3 \cos \theta
-\cos (3 \theta ))\\
&-4 \sin \theta \big), \\
F_{12}(\theta,z)=&\frac{-1}{4} \big( 4 \cos \theta
\left(r^2-z\right)+r^2 z (\sin\theta +\sin (3 \theta )-r \sin (4
\theta )+\pi  r \cos (4 \theta )\\
&+3 \pi  r)\big), \\
F_{21}(\theta,z)=&\dfrac{-1}{16 r}\big(-4 \sin
\theta+r^3+r^2 (-3 \cos \theta-\cos (3 \theta )+r (\pi  \sin (4
\theta )\\
&+2 \cos (2 \theta )+\cos (4 \theta )))\big)\big(r^2 (\sin
\theta+\sin (3 \theta )-r \sin (4 \theta )+\pi  r \cos (4 \theta )\\
&+3 \pi  r)-4 \cos \theta \big),\\
F_{22}(\theta,z)=&\dfrac{1}{16 r^2}\big( r^2 (\sin
\theta+\sin (3 \theta )-r \sin (4 \theta )+\pi  r \cos (4 \theta )+3
\pi  r)-4 \cos \theta\big)\\
&\big( 4 \cos \theta \left(r^2-z\right)+r^2z(\sin \theta+\sin (3
\theta )-r \sin (4 \theta )+\pi  r \cos (4 \theta )\\ &+3 \pi
r\big).
\end{align*}

The differential system \eqref{nfex1} is $2\pi$-periodic in the variable $\T$ and it is written in the standard form form
\eqref{s1} with $F_0(\theta,z)=\big(0,z \big)$,
$F_1(\theta,z)=\big(F_{11}(\theta,z),
F_{12}(\theta,z)\big)$ and
$F_2(\theta,z)=\big(F_{21}(\theta,z),F_{22}
(\theta,z)\big)$. Moreover the solution of the unperturbed differential system \eqref{ups} for a initial condition $z_0=(r_0,w_0)$ is given by
\begin{equation*}
\Phi(\theta,\mbox{$z_0$})=\big(r_0,w_0 e^{\theta}\big).
\end{equation*}
Consider the set $\CZ\subset \R^2$ such that $\CZ=\lbrace (\alpha,0)
: \alpha >0\rbrace$. Clearly for each $z_\al \in \CZ$, the solution
$\Phi(\theta,z_\al)$ is $2 \pi$--periodic, and therefore the differential system \eqref{nfex1} satisfies hypothesis (H$_b$). Furthermore
the linear differential system \eqref{vs} corresponding to
\eqref{nfex1} has the following fundamental matrix solution
\begin{equation*}
Y(\theta,z)=\dfrac{\p\Phi}{\p z}(\T,z_0)=\begin{pmatrix}
1 & 0 \\
0 & e^\theta
\end{pmatrix},
\end{equation*}
which satifies $Y(0,z)=Id$. Now  in order to compute
the bifurcation functions  \eqref{fi} for the differential system \eqref{nfex1} we first obtain the
functions \eqref{smoothyi} corresponding to this system,
\begin{align}
y_0(\theta,z)&=Y(\theta,z)^{-1}\big(
0,\, (e^{\theta}-1)w\big),\nonumber\\
y_1(\theta,z)&=Y(\theta,z)^{-1}\bigg(\dfrac{r^2}{48}
\left(-36 \sin \theta-4 \sin (3 \theta )+6 \pi  r \sin ^2(2 \theta
)+3 r \sin (4 \theta )\right)\nonumber\\
&\dfrac{1}{48} \left(12\left(\theta  r^3-4\right)+24 \cos \theta
\left(r^3 \sin \theta+2\right)\right),\, \dfrac{r^2}{2}(\cos
\theta-\sin \theta)\nonumber\\
&-\dfrac{e^{\theta }r}{48} (w ((36 \pi  \theta -3)
r+16)+24)+\dfrac{e^{\theta }w}{48} \big(48 \sin \theta+r^2 (12 \cos
\theta \nonumber\\
&+4 \cos (3 \theta )-3 r (\pi  \sin (4 \theta )+\cos (4 \theta
))\big)\bigg), \nonumber\\
y_2(2\pi,z)&=Y(2\pi,z)^{-1}\bigg(
\dfrac{-\pi  r (3 r+4)}{4},\,\dfrac{e^{-2 \pi }}{40}\big(((3-2 \pi )
r-6) r^2+10 \big) \nonumber \\ &+\dfrac{1}{40}\big(r^2 ((\pi  (7+15
\pi )-3) r+6)-10\big)\bigg), \nonumber
\end{align}
and from  \eqref{smoothgi}
\begin{equation}\label{gex1}
g_i(z)=Y(2\pi,z)\dfrac{y_i(2\pi,z)}{i!}
\quad \text{for} \quad i=0,1,2.
\end{equation}
So the bifurcation functions \eqref{fi} corresponding to the
functions \eqref{gex1} become
\begin{equation}\label{bfexample1}
f_1(\al)= \dfrac{ \pi  \al^3
}{2},\quad f_2(\al)=\pi  \al (3 \al+4),\quad \text{and}\quad \CF^2(\al,\e)=\e
f_1(\al)+\e^2 f_2(\al).
\end{equation}

Now we must check  that the function \eqref{bfexample1} satisfies the
hypotheses for applying  Theorem \ref{PSt1}. So
$\det(\Delta_\al)=\left|D_w\pi^{\perp} g_0(z_\al)\right|=1-e^{-2 \pi }\neq 0$, and for $a_\e=\sqrt{9 \e^2+8 \e }+3 \e$ we have that
\[
\CF^2(a_\e,\e)=0\quad \text{and}\quad
|\p_\al\CF^2(a_\e,\e)|\geq \e^2\left(8-\big|9\e+3\sqrt{\e(8+9\e)}\big|\right).
\]
Thus it is easy to find $P_0>0$ satisfying
$|\p_\al\CF^2(a_\e,\e)|\geq \e^2P_0$. Hence, in terms of Theorem
\ref{PSt1}, we have $r=1$, $k=2$, $l=2,$ and $(k+r+1)/2=2=l$. So we can apply Theorem  \ref{PSt1} in order to prove the existence of an isolated periodic solution $\big(r(\T,\e),z(\T,\e)\big)$ of the differential system \eqref{nfex1} such that
\[
r(0,\e)=\sqrt{9 \e^2+8 \e }+3 \e+\CO(\e)=\sqrt{8\e}+\CO(\e)\quad\text{and}\quad w(0,\e)=\CO(\e).
\]
Since $\T(t)=t+\CO(\e)$, this proofs ends by going back through the cylindrical coordinate change of variables.
\end{proof}

\section{Averaged functions with a continuum of zeros}\label{bf}

One of the main difficulties in  applying the averaging method for
finding periodic solutions is to compute the zeros of the averaged function
associated to the differential system. In this section we are going to show
how Theorems \ref{LSt1} and \ref{PSt1} can be  combined in order to
deal with this problem. To be precise, consider the $T$-periodic
differential system $x'=F(t,x,\e)$ as defined in \eqref{s1}, with $F_0=0$.
Note that $Y(t,z)=Id$ for every $t\in\s^1$ and $z\in D$.

As shown in the proof of Theorem
\ref{PSt1}, $x(t,z,\e)$ is a $T$-periodic solution of \eqref{s1} if
and only if $z$ is a zero of the displacement function $h$, defined in \eqref{df}. In this case $h(z,\e)=g(z,\e)$, which reads
\begin{equation}\label{dpf}
h(z,\e)=x(T,z,\e)-z=\sum_{i=1}^k \e^i g_i(z)+O(\e^{k+1}),
\end{equation}
where the averaged functions $g_i(z),$ for $i=1,2,\ldots,k$, are defined in \eqref{smoothgi}. In order
to apply Theorem \ref{PSt1} we first compute
\begin{equation}\label{cfd}
\CF^k(\al,\e)=\sum^k_{i=1}\e^ig_i(\al),
\end{equation}
as defined in \eqref{cF}, and then we try to find $a_\e \in V $ such that
$\CF^k(a_\e,\e)=0$. After that, if all the hypotheses of Theorem \ref{PSt1} are fulfilled we obtain, from its proof, the existence of a branch of zeros $z(\al)$ of the displacement function \eqref{dpf}.

This task can be very complicate because there
is no general method to find $a_\e$.  Although  if there exist $r\in\{1,\ldots,k\}$,  an open subset
$\widetilde{V}\subset D$, and a smooth function
$\widetilde{\beta}:\cl(\widetilde{V})\rightarrow D$
such that $g_1=\ldots=g_{r-1}=0$, $g_r\neq0$, and
$g_r\big(\widetilde{\al},\widetilde{\beta}\big(\widetilde{ \al}
\big)\big)=0$ for all $\widetilde{\al}\subset \widetilde{V}$
then Theorem \ref{LSt1} may be used to reduce the dimension of
system \eqref{cfd}, helping then to find the solution $a_\e$. This strategy is a general method which generalizes the results obtained in \cite{CL}.  This procedure
will be illustrated in the next subsection.

\smallskip

\subsection{Maxwell-Bloch system}
In nonlinear optics, the Maxwell--Bloch equations are used to describe laser systems. For instance, in \cite{Ar}, these equations were obtained by coupling the Maxwell equations with the Bloch equation (a linear Schr\"{o}dinger like equation which describes the evolution of atoms resonantly coupled to the laser field).  Recently in \cite{LARZ}, it was identified weak foci and centers in the Maxwell-Bloch system, which can be written as
\begin{align}\label{wb}
\dot{u}=& -a u + v ,\nonumber \\
\dot{v}=&-b v+uw, \\
\dot{w}=&-c(w-\delta)-4u v. \nonumber
\end{align}
For $c=0$ the differential system \eqref{wb} has a singular line $\lbrace (u,v,w)|u=0,v=0\rbrace$; for $c\neq 0$ and  $ac(\delta-a b)\leq 0$ the differential system  \eqref{wb} has one equilibrium ${\bf p}_0=(0,0,\delta)$; and  for $c\neq 0$ and  $ac(\delta-a b)>0$ the differential system  \eqref{wb} has three equilibria ${\bf p}_\pm=\big(\pm u^*,\pm v^*,w^*\big)$ and ${\bf p}_0$ where
\begin{equation*}
u^*=\sqrt{\dfrac{c(\delta - a b )}{4 a}},\,  v^*=a\sqrt{\dfrac{c(\delta - a b )}{4 a}},\,  w^*=ab.
\end{equation*}
Using the above strategy we shall prove the following result:
\begin{proposition}\label{exT2}
Let $\omega \in (0,\, \infty)$, $(a,b,c)=\big(a_0-b_1\e + a_2\e^2,\, -a_0+b_1\e +b_2\e^2,\, c_1\e+c_2\e^2\big)$ and $\delta=-a_0^2-\omega^2$ with $a_0(a_2+b_2)> 0$, $c_1\neq 0$ and $\e$ a small parameter. Then for $|\e|\neq0$ sufficiently small the Maxwell-Bloch differential system \eqref{wb} has an isolated
periodic solution $\varphi(t,\e)=\big(u(t,\e),\,v(t,\e),\,w(t,\e)
\big)$ such that
\begin{align}\label{Sex11}
u(t,\e)=&\e\,\omega\sqrt{\dfrac{2(a_2+b_2)}{a_0}}\,\sin t +\CO(\e^2), \nonumber \\
v(t,\e)=&\e\,\omega\sqrt{\dfrac{2(a_2+b_2)}{a_0}}\big(a_0 \sin t+\omega\cos t\big) +\CO(\e^2), \,\, \text{and}\vspace{0.2cm}\\
w(t,\e)=&\delta-\e\,\dfrac{4\omega^2(a_2+b_2)}{c_1}+\CO(\e^2).
\nonumber
\end{align}
\end{proposition}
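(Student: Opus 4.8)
The plan is to bring system \eqref{wb} into the normal form \eqref{s1} to which the strategy of Section \ref{bf} applies, and then to locate the periodic solution as a simple zero of a reduced bifurcation function. First I would substitute the $\e$-expansions of $(a,b,c)$ together with $\delta=-a_0^2-\omega^2$, translate the equilibrium $\mathbf{p}_0=(0,0,\delta)$ to the origin by $w=\delta+\ov{w}$, and record that at $\e=0$ the linear part has eigenvalues $\pm i\omega$ on the $(u,v)$-plane and $0$ on the $\ov{w}$-axis. The fact that $a+b=(a_2+b_2)\e^2$ is $\CO(\e^2)$ means the relevant trace vanishes to first order, so the Hopf-type bifurcation is governed at order $\e^2$, consistent with an $\CO(\e)$ solution. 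Since \eqref{Sex11} is $\CO(\e)$, I would rescale $(u,v,\ov{w})=\e(U,V,W)$; after this scaling the unperturbed field becomes the linear center, every orbit is $2\pi/\omega$-periodic, and the system is in the form \eqref{s1}. Passing to the rotating frame determined by the fundamental matrix of this linear center (variation of parameters) then turns the unperturbed part into $F_0=0$, exactly the situation of Section \ref{bf}, with period $T=2\pi/\omega$; this is the analogue of the cylindrical reduction used for Proposition \ref{pex1}.

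Next I would compute the averaged functions $g_1$ and $g_2$ of \eqref{smoothgi}--\eqref{smoothyi} for the transformed system. The key qualitative feature to establish is that $g_1$ is degenerate: its planar part turns out to be proportional to the unperturbed vector field, hence tangent to the periodic orbits, so its radial (amplitude) component vanishes identically. Consequently $g_1$ has a continuum of zeros and carries no first-order information about the amplitude. What $g_1$ does determine, through its $W$-component, is a relation expressing $W$ as a negative multiple of $\rho^2/c_1$ (so the sign and nonvanishing of $c_1$ matter), and this relation is solvable for $W$ precisely because $c_1\neq0$ makes $\p_W(\pi^\perp g_1)=\De_\al$ nonsingular. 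This is the place where the hypothesis $c_1\neq0$ enters.

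With the continuum identified, I would invoke the reduction described in Section \ref{bf}: apply Theorem \ref{LSt1} to $\CF^k/\e^{r}$ to eliminate the $W$-direction and obtain a one-dimensional reduced bifurcation function in the amplitude variable, whose leading nontrivial term comes from $g_2$ and therefore involves $a_2+b_2$. Solving this reduced equation determines the amplitude $\omega\sqrt{2(a_2+b_2)/a_0}$ of the $(u,v)$-oscillation and, through the relation fixed by $g_1$, the $w$-displacement $-4\omega^2(a_2+b_2)/c_1$, as recorded in \eqref{Sex11}. The hypothesis $a_0(a_2+b_2)>0$ is exactly what guarantees that this amplitude is real and positive and that it is a simple zero, so that the nondegeneracy hypotheses $(i)$--$(iv)$ of Theorem \ref{PSt1} hold with the appropriate $r$ and $l$. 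Applying Theorem \ref{PSt1} then yields an isolated $T$-periodic solution $\f(t,\e)$ with $\pi\f(0,\e)$ and $\pi^\perp\f(0,\e)$ controlled as in its conclusion, and undoing the scaling, the linear change of coordinates and the time reparametrization produces the asymptotic expressions \eqref{Sex11}.

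I expect the main obstacle to be the explicit computation of $g_1$ and $g_2$ in the transformed coordinates together with the correct handling of the degenerate reduction: one must verify that the amplitude direction of $g_1$ genuinely vanishes (so that the problem is really of continuum type) and then carry the second-order term $g_2$ through the Lyapunov--Schmidt reduction of Theorem \ref{LSt1} to extract the scalar equation for the amplitude. The bookkeeping through the successive changes of variables, which is what fixes the precise constants in \eqref{Sex11}, is the most error-prone step.
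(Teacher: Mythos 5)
Your overall plan---rescale by $\e$, reduce to the standard form \eqref{s1} with $F_0=0$, observe that the amplitude component of $g_1$ vanishes while its $W$-component pins $W$ to $-2a_0\rho^2/c_1$, then run the Lyapunov--Schmidt reduction of Theorem \ref{LSt1} with the amplitude equation coming from $g_2$---is the paper's strategy, and all your predicted constants (the amplitude $\omega\sqrt{2(a_2+b_2)/a_0}$, the offset $-4\omega^2(a_2+b_2)/c_1$, the roles of $c_1\neq0$ and $a_0(a_2+b_2)>0$) are correct. However, there is a genuine gap at the reduction step. You pass to a rotating frame via variation of parameters, keeping $t$ as the independent variable and seeking solutions of period exactly $T=2\pi/\omega$. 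For this \emph{autonomous} system that device fails: the limit cycle of Proposition \ref{exT2} has period $2\pi/\omega+\CO(\e)$ (the paper stresses exactly this right after \eqref{Sex11}), so it is not a fixed point of the time-$2\pi/\omega$ map, and no zero of the rotating-frame displacement function corresponds to it. Concretely, writing the rescaled system \eqref{syex22} in the rotating frame and averaging, the planar part of $g_1$ at a point $(Z,W)$ is not zero but equals $\nu(W)\,(-Z_2,Z_1)$ with $\nu(W)=-(W-2a_0b_1)/(2\omega)$: it is \emph{tangent} to the circles, but tangent is not zero. Since $\nu(W_0)\neq0$ in general at the relevant level $W_0=-4\omega^2(a_2+b_2)/c_1$, the zero set of $g_1$ in the rotating frame is not the one-dimensional continuum you need, hypothesis (H$_a$) fails for $\widetilde g_0=g_1$, and the whole reduction collapses. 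The nonvanishing $\nu$ is precisely the first-order phase drift, i.e.\ the $\CO(\e)$ correction to the period that a fixed-period formulation cannot absorb; so the step ``its planar part is tangent to the orbits, consequently $g_1$ has a continuum of zeros'' is a non sequitur in your setting.

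The paper avoids this by a different reduction: after the change $(u,v,w)=(\e V,\e(a_0V+\omega U),\delta+\e W)$ it passes to cylindrical coordinates $(U,V,W)=(r\cos\T,r\sin\T,w)$, checks $\dot\T=\omega+\CO(\e)\neq0$, and takes $\T$ as the new independent variable. This eliminates the phase (and with it the unknown period), producing the two-dimensional $2\pi$-periodic system \eqref{Aex2} in $z=(r,w)$ with $F_0=0$; there the degeneracy you describe really does yield the continuum $\widetilde\CZ=\{(\al,-2a_0\al^2/c_1):\al>0\}$ of zeros of $g_1$, with $\Delta_\al=-2\pi c_1/\omega$, and your subsequent steps go through. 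Two smaller remarks: the paper then applies Theorem \ref{LSt1} \emph{directly} to $\widetilde g=h/\e$ (with $\widetilde g_0=g_1$, $\widetilde g_1=g_2$), rather than applying Theorem \ref{LSt1} to $\CF^k/\e^r$ and afterwards invoking Theorem \ref{PSt1}; your two-step variant can be made to work, but then hypothesis $(iv)$ of Theorem \ref{PSt1} (with $k=2$, $r=1$, $l=2$) requires a separate verification---it follows from $\Delta_\al\neq0$ together with $\widetilde f_1'(\al_0)\neq0$ via the block factorization of the Jacobian---which you assert rather than prove.
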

We emphasize again that the expression \eqref{Sex11} does not imply that the period of the solution $\varphi(t,\e)$ is $2\pi$. That is because we cannot assure the period of the order $\e^2$ functions.

\begin{proof}
Applying the change of variables $(u,v,w)=(\e V,\e (a_0V+\omega U),\delta+\e W)$,  the differential system \eqref{wb} reads
\begin{align}\label{syex22}
\dot{U}=&\,-\omega V+\dfrac{\e}{\omega}\big( VW -2a_0b_1V-b_1\omega U\big)+\e^2\Big(\dfrac{a_0(a_2-b_2)V}{\omega}-b_2U\Big),\nonumber\\
\dot{V}=&\,\omega U+\e b_1V-\e^2 a_2V,\\
\dot{W}=&\,\e\big(-c_1W-4V(a_0V+\omega U)\big)-\e^2c_ 2.\nonumber
\end{align}
In order to apply the strategy described above we must write the differential system \eqref{syex22} in the standard form \eqref{s1}. To this end we proceed as usual: first we consider the cylindrical change of variables $(U,V,W)=(r \cos \theta, r \sin \theta,w)$, where $r>0$; after checking that $\dot \T=\omega+\CO(\e)\neq0$, for $|\e|\neq0$ sufficiently small, we take $\theta$ as the new independent variable. Therefore the differential system \eqref{syex22} becomes equivalent to the non-autonomous differential system
\begin{equation}\label{Aex2}
\dfrac{d z}{d \theta}=\left(\dfrac{\dot r}{\dot \T},\dfrac{\dot w}{\dot \T}\right)=\e F_1(\theta ,z)+\e^2 F_2(\theta,z)+\CO(\e^3),
\end{equation}
where $z=(r,w)\in \R^+\times\R$ and $\T\in\mathbb{S}^1$. Moreover
\begin{equation}\label{F1F2}
\begin{array}{rl}
F_1(\theta , z)=\!\!\!\!&\Bigg(\dfrac{r}{2\omega^2}\big((w-2 a_0b_1)\sin(2\theta)-2b_1\omega \cos(2\theta) \big),\\
&-\dfrac{\big(c_1 w+4r^2 \sin\theta (\omega \cos \theta + a_0 \sin\theta)\big)}{\omega} \Bigg),\\
F_2(\theta , z)=\!\!\!\!&\Bigg( \dfrac{1}{2\omega^4}\big(2b_1\omega\cos\theta+(2a_0b_1-w)\sin\theta \big)\big(2b_1\omega\cos(2\theta)+(2a_0b_1\\
&-w)\sin(2\theta)\big)+r\omega^2\big( (a_2-b_2)(\omega\cos(2\theta)+a_0\sin(2\theta))-(a_2+b_2) \big),\\
& \dfrac{\big(2b_1\omega\cos\theta+(2a_0b_1-w)\sin\theta\big)}{\omega^2}\big(c_1w+4r^2\sin\theta(\omega\cos\theta+a_0\sin\theta) \big)\Bigg).\\
\end{array}
\end{equation}
Now the prime denotes the derivative with respect to the variable $\T$.

\smallskip

For the differential system \eqref{Aex2} we have that $ F_0(\theta, z)=0$. Then $x(\theta, z,0)=(r,w)$ is the solution to the unperturbed  system  and $Y(t,z)=Id$ is its corresponding fundamental matrix.  In this case the averaged functions reads
\begin{equation}\label{afexample2}
\begin{array}{rl}
g_1(z)=&\left(0, -\dfrac{2\pi\big(2a_0r^2+c_1w\big)}{\omega} \right), \\
g_2(z)=&\Bigg(\dfrac{\pi r\big(3a_0r^2+c_1w-2(a_2+b_2)\omega^2\big)}{2\omega^3},\dfrac{\pi}{\omega^3}\big((2a_0b_1-w)(6a_0r^2+c_1w)\\
&+2c_1\pi(2a_0r^2+c_1w)\omega+2\big((2b_1+c_1)r^2-c_2w\big)\omega^2\big)\Bigg).
\end{array}
\end{equation}

From here instead of following the steps of Theorem \ref{PSt1} we are going to use Theorem \ref{LSt1} to find directly a branch of zeros of the displacement function \eqref{dpf}.  To do this we define the function $\widetilde{g}(z,\e)=h(z,\e)/\e$, where now
$\widetilde{g}(z,\e)= \widetilde{g}_0(z)
+\e\widetilde{g}_1(z)
+\CO(\e^2)$ with
$\widetilde{g}_0(z)=g_1(z)$ and
$\widetilde{g}_1(z)=g_2(z)$.
Note that the averaged function $\widetilde{g}_0(z)=g_1(z)$ vanishes on the manifold
$$
\widetilde{\CZ}=\Bigg\{z_{\al}=\Bigg(
\al,-\dfrac{2 a_0 \al^2}{c_1}\Bigg):\,\al>0 \Bigg\}.
$$
Furthermore,  $\Delta_{\al}=-(2\pi c_1)/\omega$ is the lower right corner of the
Jacobian matrix $D\widetilde{g} _ 0(z_{\al})$ for all
$z_{\al}\in \widetilde{\CZ}$. Computing then the bifurcation
function \eqref{fi} corresponding to $\widetilde{g}(z,\e)$ we get
\begin{equation*}
\widetilde{f}_1(\al)=\dfrac{\pi \al \big(a_0\al^2-2(a_2+b_2)\omega^2 \big)}{2\omega^3},
\end{equation*}
and
$\widetilde{\CF}^1(\al,\e)=\e\widetilde{f}_1(\al)$. Solving the equation $\widetilde{\CF}^1(\al,\e)=0$ we find
$$a_\e=\al_0=\omega\sqrt{\dfrac{2(a_2+b_2)}{a_0}}.$$
Moreover, $\left|
\p_\al\widetilde{\CF}^1(\al_0,\e)\right|=2\e \pi(a_2+b_2)/\omega$ so it is clear that hypotheses $(iii)$ and $(iv)$ of Theorem  \ref{LSt1}  are fulfilled with $l=1$, $r=1$ and $k=1$. Thus, for $|\e|\neq 0$ sufficiently small, it follows that there exists
\begin{equation}\label{ap1z}
z(\e)=\left(\omega\sqrt{\dfrac{2(a_2+b_2)}{a_0}},-\dfrac{4\omega^2(a_2+b_2)}{c_1}\right)+\CO(\e)
\end{equation}
 such that $\widetilde{g}(z(\e),\e)=h(z(\e),\e)/\e=0$ for every $|\e|\neq0$ sufficiently small. Therefore we conclude that there exists a $2\pi$-periodic solution periodic $(r(\T,\e),w(\T,\e))$ of the non-autonomous differential system \eqref{Aex2} satisfying $(r(\T,0),w(\T,0))=z(0)$. Since $\T(t)=\omega t+\CO(\e)$, this proofs ends by going back through the cylindrical coordinate change of variables and then doing $(u,v,z)=\e(V,a_0V+\omega U,W)$ .
\end{proof}

\subsection{Stability}
We have seen that  the averaged functions \eqref{afexample2} up to order $2$ were sufficient for detecting the existence of a periodic solution of the differential system \eqref{wb}.  Now we show that the higher order averaged functions may play an important role for studying the stability of the periodic solution $\varphi(t,\e)$ provided by Theorem \ref{PSt1}.

Clearly the stability of the periodic solution $\varphi(t,\e)$ can be derived from the eigenvalues of the Jacobian matrix of the displacement function $D_zh(z(\e),\e)$  evaluated at $z(\e)=\varphi(0,\e)$ .  From equation \eqref{ap1z} we can write $z(\e)=z_0+\CO(\e^2)$. Moreover, since in this case $Y(t,z)=Id$ then  $D_zh(z(\e),\e)=\e Dg_1(z_0)+\CO(\e),$ where
\begin{equation*}
Dg_1(z_0)=\begin{pmatrix} 0 & 0\\
-8\pi\sqrt{2 a_0(a_2+b_2)}& -\dfrac{2\pi c_1}{\omega}
\end{pmatrix}.
\end{equation*}
So a first approximation of the eigenvalues $\lambda_{\pm}$ of the Jacobian matrix  $D_zh(z(\e),\e)$ is given by
\begin{equation}\label{app1}
\lambda_+=\CO(\e^2),\quad  \lambda_-=-\e\dfrac{2 \pi c_1}{\omega}+\CO(\e^2).
\end{equation}
Clearly the stability of the periodic solution $\varphi(t,\e)$ cannot be completely described by these expressions. Now we show how the higher order bifurcation functions and averaging functions can be used to better analyses the stability of the periodic solution.

We recall that, after some changes of coordinates, the differential system \eqref{wb} can be transformed into the standard form \eqref{Aex2}. Expanding it in power series of $\e$ up to order $3$, the differential system  \eqref{Aex2}  becomes
\begin{equation*}\label{Aex2.3}
\dfrac{d z}{d\T}=\e F_1(\theta , z)+\e^2F_2(\theta,z)+\e^3F_3(\theta,z)+\CO(\e^4),
\end{equation*}
where $ F_1$ and $ F_2$ are given in \eqref{F1F2} and
\begin{align*}
 F_3(\theta,z)=&\Bigg(\dfrac{\pi r}{4\omega^5}\Big(-3(a_0b_1-w)\big(5a_0r^2+c_1w\big)-2c_1\pi(2a_0r^2+c_1w)\omega+\big(4a_0b_1(a_2\\
&+b_2)-3(2b_1+c_1)r^2-2(a_2+b_2-c_2)w\big)\omega^2\Big),\\
&\dfrac{\pi}{12\omega^5}\Big(12 \pi  \omega  \left(a_0^2 \left(6 r^4-16 b_1 c_1 r^2\right)+2 a_0 c_1 w \left(7 r^2-2 b_1 c_1\right)+3 c_1^2 w^2\right)\\
&-2 \omega ^2 \left(w \left(6 a_0 (a_2 c_1-2 b_1 c_2-b_2 c_1)+6 b_1^2 c_1-9 r^2 (4 b_1+3 c_1)+8 \pi ^2 c_1^3\right)\right.\\
&\left.+a_0 r^2 \left(36 a_0 (a_2-b_2)+108 b_1^2+36 b_1 c_1+2 \left(8 \pi ^2-3\right) c_1^2-45 r^2\right)+6 c_2 w^2\right)\\
&+24 \pi  \omega ^3 \left(r^2 (2 a_0 (a_2+b_2+c_2)-c_1 (2 b_1+c_1))+2 c_1 c_2 w\right)\\
&-9 (w-2 a_0 b_1)^2 \left(10 a_0 r^2+c_1 w\right)+24 r^2 \omega ^4 (c_2-2 a_2)\Bigg).
\end{align*}
From \eqref{smoothyi} and \eqref{fi} we compute the third averaged function and the second bifurcation function, respectively, as
\begin{align*}
g_3(z)=&\Bigg(\dfrac{\pi  r }{4 \omega ^5}\left(\omega ^2 \left(4 a_0 b_1 (a_2+b_2)-2 z (a_2+b_2-c_2)-3 r^2 (2 b_1+c_1)\right)\right.\\
&\left.-3 (2 a_0 b_1-z) \left(5 a_0 r^2+c_1 z\right)-2 \pi  c_1 \omega  \left(2 a_0 r^2+c_1 z\right)\right),\\
&\dfrac{\pi }{12 \omega ^5} \left(12 \pi  \omega  \left(a_0^2 \left(6 r^4-16 b_1 c_1 r^2\right)+2 a_0 c_1 z \left(7 r^2-2 b_1 c_1\right)+3 c_1^2 z^2\right)\right.\\
&\left.-2 \omega ^2 \left(z \left(6 a_0 (a_2 c_1-2 b_1 c_2-b_2 c_1)+6 b_1^2 c_1-9 r^2 (4 b_1+3 c_1)+8 \pi ^2 c_1^3\right)\right.\right.\\
&\left.\left.+a_0 r^2 \left(36 a_0 (a_2-b_2)+108 b_1^2+36 b_1 c_1+2 \left(8 \pi ^2-3\right) c_1^2-45 r^2\right)\right.\right.\\
&\left.\left.+6 c_2 z^2\right)+24 \pi  \omega ^3 \left(r^2 (2 a_0 (a_2+b_2+c_2)-c_1 (2 b_1+c_1))+2 c_1 c_2 z\right)\right.\\
&\left.-9 (z-2 a_0 b_1)^2 \left(10 a_0 r^2+c_1 z\right)+24 r^2 \omega ^4 (c_2-2 a_2)\right) \Bigg)
\end{align*}
and
$$
\widetilde{f}_2(\al)=-\frac{\pi  r \left(10 a_0^2 r^2 \left(b_1 c_1+r^2\right)+\omega ^2 \left(c_1 r^2 (2 b_1+c_1)-4 a_0 (a_2+b_2) \left(b_1 c_1+r^2\right)\right)\right)}{4 c_1 \omega ^5}.
$$
So $\widetilde\CF^2(\alpha,\e)=\e \widetilde{f}_1(\al)+\e^2 \widetilde{f}_2(\al)$.
As shown in the previous subsection $a_\e=\al_0$ is a simple root of the function $\widetilde{f}_1(\al)$. Using the Implicit Function Theorem we find a branch of zeros of the equation $\CF^2(\al,\e)=0$  having the form $\al=\ov{a}_\e=\al_0+\e\al_1+\CO(\e^2)$, where
\[
\al_1=\sqrt{\dfrac{a_2+b_2}{2a_0}}\left(\dfrac{8 a_0^2 b_1 c_1+\omega ^2 (16 a_0 (a_2+b_2)+c_1 (2 b_1+c_1))}{2 |a_0| c_1 \omega }\right).
\]
Note that $\ov{a}_\e$ satisfies the hypotheses $(iii)$ and $(iv)$ of Theorem \ref{LSt1} for $r=1$, $l=1$ and $k=2$. Using the relation  $|\pi z(\e)-\pi z_{\ov{a}_\e}|=| \al(\e)-\ov{a}_\e|=\CO\big(\e^{2}\big)$, provided by Theorem \ref{LSt1}, we write $\al(\e)=\al_0+\e\al_1+\CO(\e^2)$. From Claim \ref{c1} of the proof of Theorem \ref{LSt1} we get
\begin{align*}
\ov{\beta}(\al(\e),\e)=&\beta\big(\al(\e)\big)+\e\ga_1\big(\al(\e)\big)+\CO(\e^2)\\
=&\beta\big(\al_0+\e\al_1+\CO(\e^2)\big)+\e\ga_1\big(\al_0+\e\al_1+\CO(\e^2)\big)+\CO(\e^2).
\end{align*}
Expanding $\ov{\beta}(\al(\e),\e)$ in powers series of $\e$ we have $ \ov{\beta}(\al(\e),\e)=\beta_0+\e\beta_1+\CO(\e^2)$ where
\begin{align*}
\beta_0=&\dfrac{(a_2+b_2)\omega^2}{c_1},\\
\beta_1=&\dfrac{4(a_2+b_2)\big(6a_0^2b_1c_1+\big(16a_0(a_2+b_2)+c_1(2b_1+c_1)\big)\omega^2\big)}{a_0c_1^2}.
\end{align*}
Finally we obtain $z(\e)=\big(\al(\e),\ov{\beta}\big(\al(\e),\e\big)\big)=z_0+\e z_1+\CO(\e^2)$, with $z_0=(\al_0,\beta_0)$ and $z_1=\big(\al_1,\beta_1\big)$. Then we compute the Jacobian matrix of the displacement function \eqref{df} evaluated at $z(\e)$  as
\begin{align*}
D_zh(z(\e),\e)&=\e D_zg_1(z(\e))+\e^2D_zg_2(z(\e))+\CO(\e^3)\nonumber\\
&=\e D_zg_1\big(z_0+\e z_1+\CO(\e^2)\big)+\e^2D_zg_2\big(z_0+\e z_1+\CO(\e^2)\big)+\CO(\e^3)\nonumber\\
&=\e D_zg_1(z_0)+\e^2\left(D^2_zg_1(z_0)z_1+D_zg_2(z_0)\right)+\CO(\e^3).
\end{align*}
Let $D_zg_1(z_0)=\big(p_{ij}\big)_{2\times 2}$ and $D_zg_2(z_0)=\big(q_{ij}\big)_{2\times 2}$ then expanding $D_zh(z(\e),\e)$ in Taylor series around $\e=0$ we have  $D_zh(z(\e),\e)= \e A_1+\e^2 A_2+\CO(\e^3)$ with $A_1=D_zg_1(z_0)$ and $A_2=\big(D_zp_{ij}(z_0).z_1+q_{ij}(z_0)\big)_{2\times 2}$. Therefore we may improve the approximation \eqref{app1} of the eigenvalues $\lambda_{\pm}$ of $D_zh(z(\e),\e)$ as
\begin{align*}
\lambda_+=&\e^2\dfrac{2\pi(a_2+b_2)}{\omega}+\CO(\e^3),\\
\lambda_-=&-\e\dfrac{2c_1\pi}{\omega}+\e^2\dfrac{2\pi\big(a_0b_1c_1+\omega\big(c_1^2\pi-c_2\omega\big)\big)}{\omega^3}+\CO(\e^3).
\end{align*}
Hence we can deduce the following statements about the stability of the periodic solution $\varphi(t,\e)=x(t,z(\e),\e)$. Recall that from, hypotheses of Proposition \ref{exT2}, $a_0(a_2+b_2)>0$. So:
\begin{itemize}
\item[(a)] If $\e c_1<0$ the solution $\varphi(t,\e)$ has at least one  unstable direction.
\item[(b)] If $a_2+b_2>0$   and $a_0>0$ then the solution $\varphi(t,\e)$ has at least one  unstable direction.
\item[(c)] If $a_2+b_2<0$, $\e c_1>0$ and  $a_0<0$ then the solution $\varphi(t,\e)$ is asymptotically stable.
\end{itemize}

The following figures illustrate the behavior of the Maxwell--Block system \eqref{wb} satisfying the hypotheses of Proposition \ref{exT2} with $a_0=-1$, $a_2=-2$, $b_1=1$, $b_2=-2$, $c_1=2$, $c_2=1$, $\omega=1$ and $\e=1/25$.
\begin{figure}[!htbp]
  \centering
  \subfloat[ ]{\includegraphics[width=0.5\textwidth]{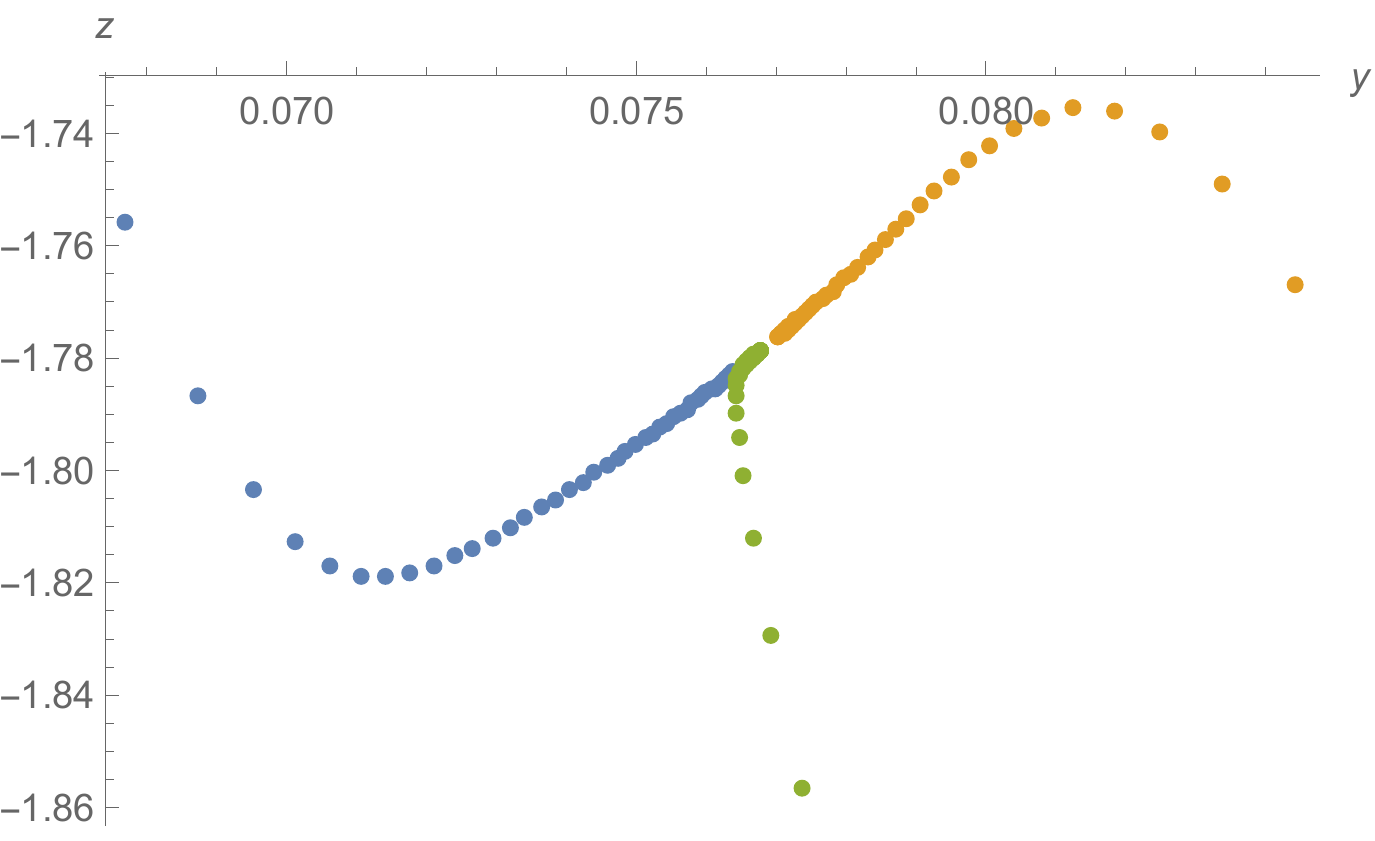}\label{fig:f1}}
  \hfill
  \subfloat[ ]{\includegraphics[width=0.5\textwidth]{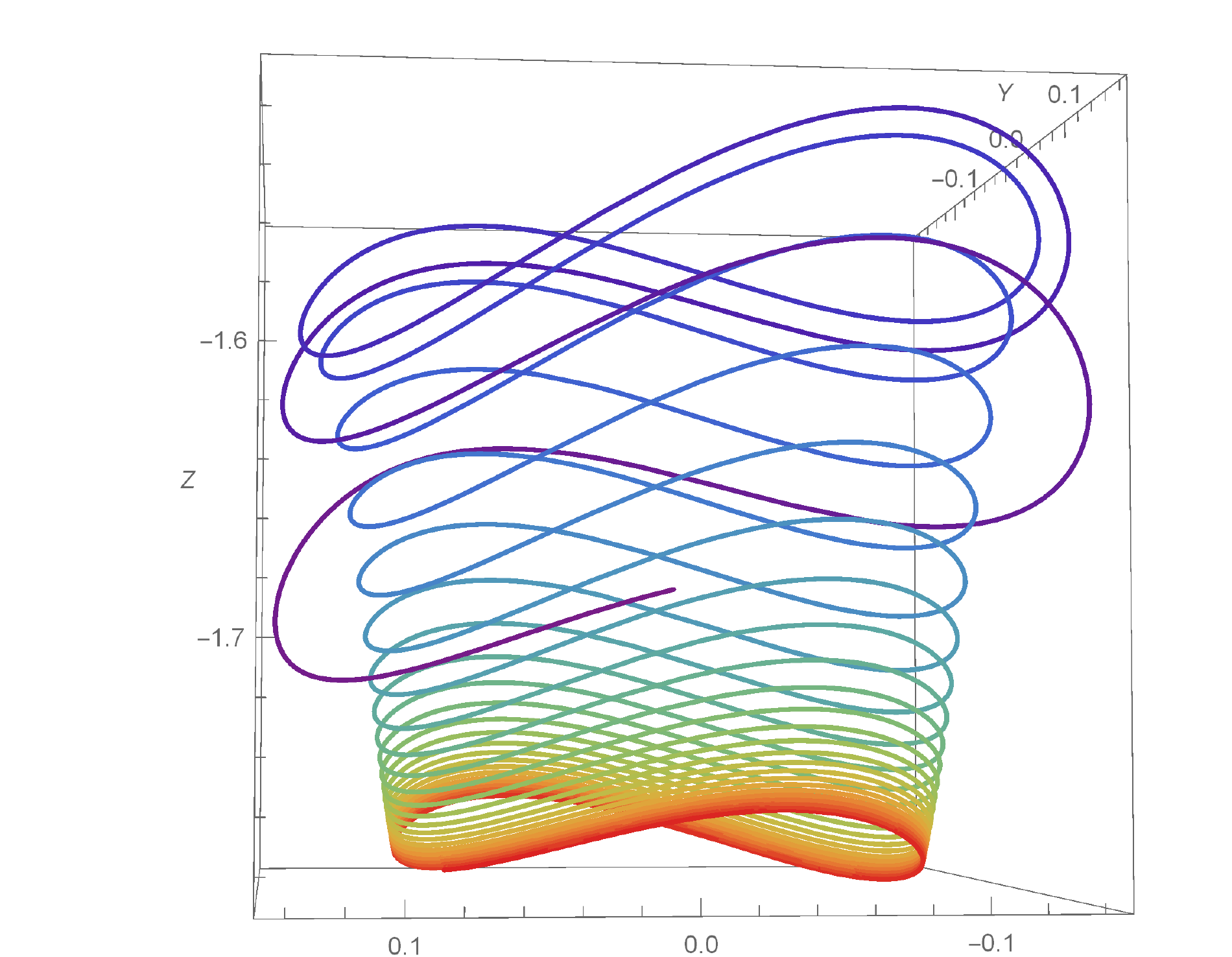}\label{fig:f2}}
  \caption{\Small (A) Transversal section with $u=0$ and $v>0$. (B) Solution starting at $(0,\e \omega^2(2(a_2+b_2)/a_0)^{1/2},\delta-4\e\omega^2(a_2+b_2)/c_1)$ being attracted by the limit cycle \eqref{Sex11}.}
\end{figure}

\section*{Appendix A: Bifurcation functions up to order $5$.}

In this appendix we develop the recurrences given by Theorems \ref{LSt1} and \ref{PSt1} to compute explicitly the expressions of the bifurcation functions and the averaged functions up to order $5$. As far as we know we are the first to provide these expressions.

\smallskip

From the recurrences \eqref{fi} and \eqref{y}, we explicitly develop the expressions of the bifurcation functions $f_i: V \rightarrow \R^m$, for $i=1,2,..,5$, as stated in Theorem \ref{LSt1}. Recall that $\G_{\al}=(\p\pi g_0/\p b)(z_{\al})$. So
\begin{align*}
f_1(\al)=&\G_\al \ga_ 1(\al)+\pi g_1(z_\al), \vspace{0.3cm}\\
\ga_1(\al)=&-\De_\al^{-1}\pi^\perp g_1(z_\al), \vspace{0.3cm}\\
f_2(\al)=&\dfrac{1}{2}\G_\al\ga_2(\al)+\dfrac{1}{2}\dfrac{\p^2\pi
g_0}{\p b^2}(z_\al)\ga_1(\al)^2+\dfrac{\p\pi g_1}{\p
b}(z_\al)\ga_1(\al)+\pi g_2(z_\al), \vspace{0.3cm}\\
\ga_2(\al)=&-\De_\al^{-1}\Bigg(\dfrac{\p^2\pi^\perp g_0}{\p
b^2}(z_\al)\ga_1(\al)^2 +2\dfrac{\p\pi^\perp g_1}{\p
b}(z_\al)\ga_1(\al)+2\pi^\perp g_2(\al) \Bigg),\vspace{0.3cm}\\
f_3(\al)=&\dfrac{1}{6}\G_\al\ga_3(\al)+\dfrac{1}{6}\dfrac{\p^3\pi
g_0}{\p b^3}(z_\al)\ga_1(\al)^3+\dfrac{1}{2}\dfrac{\p^2\pi g_0}{\p
b^2}(z_\al)\ga_1(\al)\odot \ga_2(\al)\vspace{0.3cm}\\
&+\dfrac{1}{2}\dfrac{\p^2\pi g_1}{\p
b^2}(z_\al)\ga_1(\al)^2+\dfrac{1}{2}\dfrac{\p\pi g_1}{\p
b}(z_\al)\ga_2(\al)+\dfrac{\p\pi g_2}{\p b}(z_\al)\ga_1(\al)\vspace{0.3cm}\\
&+\pi g_3(z_\al), \vspace{0.3cm}\\
\ga_3(\al)=&-\De_\al^{-1}\Bigg(\dfrac{\p^3\pi^\perp g_0}{\p
b^3}(z_\al)\ga_1(\al)^3 +3\dfrac{\p^2\pi^\perp g_0}{\p
b^2}(z_\al)\ga_1(\al)\odot \ga_2(\al) \vspace{0.3cm}\\
&+3\dfrac{\p^2\pi^\perp g_1}{\p
b^2}(z_\al)\ga_1(\al)^2+2\dfrac{\p\pi^\perp g_1}{\p
b}(z_\al)\ga_2(\al)+6\dfrac{\p\pi^\perp g_2}{\p
b}(z_\al)\ga_1(\al)\vspace{0.3cm}\\
&+6\pi^\perp g_3(\al) \Bigg),\vspace{0.3cm}\\
f_4(\al)=&\dfrac{1}{24}\G_\al \ga_4(\al)+\dfrac{1}{24}\dfrac{\p^4\pi
g_0}{\p b^4}(z_\al)\ga_1(\al)^4+\dfrac{1}{4}\dfrac{\p^3\pi g_0}{\p
b^3}(z_\al)\ga_1(\al)^2\odot\ga_2(\al)\vspace{0.3cm}\\
&+\dfrac{1}{8}\dfrac{\p^2\pi g_0}{\p
b^2}(z_\al)\ga_2(\al)^2+\dfrac{1}{6}\dfrac{\p^2\pi g_0}{\p
b^2}(z_\al)\ga_1(\al)\odot\ga_3(\al)\vspace{0.3cm}\\
&+\dfrac{1}{6}\dfrac{\p^3\pi g_1}{\p
b^3}(z_\al)\ga_1(\al)^3+\dfrac{1}{2}\dfrac{\p^2\pi g_1}{\p
b^2}(z_\al)\ga_1(\al)\odot\ga_2(\al)+\dfrac{1}{6}\dfrac{\p\pi
g_1}{\p b}(z_\al)\ga_3(\al)\vspace{0.3cm}\\
&+\dfrac{1}{2}\dfrac{\p^2\pi g_2}{\p
b^2}(z_\al)\ga_1(\al)^2+\dfrac{1}{2}\dfrac{\p\pi g_2}{\p
b}(z_\al)\ga_2(\al)+\dfrac{\p\pi g_3}{\p b}(z_\al)\ga_1(\al)+\pi g_4(z_\al),\vspace{0.3cm}\\
\ga_4(\al)=&-\De_\al^{-1}\Bigg(\dfrac{\p^4\pi^\perp g_0}{\p
b^4}(z_\al)\ga_1(\al)^4 +3\dfrac{\p^2\pi^\perp g_0}{\p
b^2}(z_\al)\ga_2(\al)^2+4\dfrac{\p^2\pi^\perp g_0}{\p
b^2}(z_\al)\ga_1(\al)\odot\ga_3(\al)\vspace{0.3cm}\\
&+6\dfrac{\p^3\pi^\perp g_0}{\p
b^3}(z_\al)\ga_1(\al)^2\odot\ga_2(\al)+4\dfrac{\p\pi^\perp g_1}{\p
b}(z_\al)\ga_3(\al)+12\dfrac{\p^2\pi^\perp g_1}{\p
b^2}(z_\al)\ga_1(\al)\odot\ga_2(\al)\vspace{0.3cm}\\
&+4\dfrac{\p^3\pi^\perp g_1}{\p
b^3}(z_\al)\ga_1(\al)^3+12\dfrac{\p\pi^\perp g_2}{\p
b}(z_\al)\ga_2(\al)+12\dfrac{\p^2\pi^\perp g_2}{\p
b^2}(z_\al)\ga_1(\al)^2\vspace{0.3cm}\\ &+24\dfrac{\p\pi^\perp g_3}{\p
b}(z_\al)\ga_1(\al)\Bigg),\vspace{0.3cm}\\
f_5(\al)=&\dfrac{1}{120}\G_\al
\ga_5(\al)+\dfrac{1}{12}\dfrac{\p^2\pi g_0}{\p
b^2}(z_\al)\ga_2(\al)\odot\ga_1(\al)+\dfrac{1}{24}\dfrac{\p^2\pi
g_0}{\p b^2}(z_\al)\ga_1(\al)\odot\ga_4(\al)\vspace{0.3cm}\\
&+\dfrac{1}{8}\dfrac{\p^3\pi g_0}{\p
b^3}(z_\al)\ga_1(\al)\odot\ga_2(\al)^2+\dfrac{1}{12}\dfrac{\p^3\pi
g_0}{\p b^3}(z_\al)\ga_1(\al)^2\odot\ga_3(\al)\vspace{0.3cm}\\
&+\dfrac{1}{12}\dfrac{\p^4\pi g_0}{\p
b^4}(z_\al)\ga_1(\al)^3\odot\ga_2(\al)+\dfrac{1}{120}\dfrac{\p^5\pi
g_0}{\p b^5}(z_\al)\ga_1(\al)^5\vspace{0.3cm}\\
&+\dfrac{1}{24}\dfrac{\p\pi g_1}{\p
b}(z_\al)\ga_4(\al)+\dfrac{1}{8}\dfrac{\p^2\pi g_1}{\p
b^2}(z_\al)\ga_2(\al)^2+\dfrac{1}{6}\dfrac{\p^2\pi g_1}{\p
b^2}(z_\al)\ga_1(\al)\odot\ga_3(\al)\vspace{0.3cm}\\
&+\dfrac{1}{4}\dfrac{\p^3\pi g_1}{\p
b^3}(z_\al)\ga_1(\al)^2\odot\ga_2(\al)+\dfrac{1}{24}\dfrac{\p^4\pi
g_1}{\p b^4}(z_\al)\ga_1(\al)^4+\dfrac{1}{6}\dfrac{\p\pi g_2}{\p
b}(z_\al)\ga_3(\al)\vspace{0.3cm}\\
&+\dfrac{1}{2}\dfrac{\p^2\pi g_2}{\p
b^2}(z_\al)\ga_1(\al)\odot\ga_2(\al)+\pi
g_4(z_\al)+\dfrac{1}{6}\dfrac{\p^3\pi g_2}{\p
b^3}(z_\al)\ga_1(\al)^3\vspace{0.3cm}\\
&+\dfrac{1}{2}\dfrac{\p\pi g_3}{\p
b}(z_\al)\ga_2(\al)+\dfrac{1}{2}\dfrac{\p^2\pi g_3}{\p
b^2}(z_\al)\ga_1(\al)^2+\dfrac{\p\pi g_4}{\p b}(z_\al)\ga_1(\al)\vspace{0.3cm}\\
&+\pi g_5(z_\al),\vspace{0.3cm}\\
\ga_5(\al)=&-\De_\al^{-1}\Bigg(10\dfrac{\p^2\pi^\perp g_0}{\p
b^2}(z_\al)\ga_2(\al)\odot\ga_3(\al) +5\dfrac{\p^2\pi^\perp g_0}{\p
b^2}(z_\al)\ga_1(\al)\odot\ga_4(\al)\vspace{0.3cm}\\
&+15\dfrac{\p^3\pi^\perp g_0}{\p
b^3}(z_\al)\ga_1(\al)\odot\ga_2(\al)^2+10\dfrac{\p^3\pi^\perp
g_0}{\p b^3}(z_\al)\ga_1(\al)^2\odot\ga_3(\al)\vspace{0.3cm}\\
&+10\dfrac{\p^4\pi^\perp g_0}{\p b^4}(z_\al)\ga_1(\al)^3\odot
\ga_2(\al)+\dfrac{\p^5\pi^\perp g_0}{\p b^5}(z_\al)\ga_1(\al)^5\vspace{0.3cm}\\
&+5\dfrac{\p\pi^\perp g_1}{\p
b}(z_\al)\ga_4(\al)+15\dfrac{\p^2\pi^\perp g_1}{\p
b^2}(z_\al)\ga_2(\al)^2+20\dfrac{\p^2\pi^\perp g_1}{\p
b^2}(z_\al)\ga_1(\al)\odot\ga_3(\al) \vspace{0.3cm}\\
&+30\dfrac{\p^3\pi^\perp g_1}{\p
b^2}(z_\al)\ga_1(\al)^2\odot\ga_2(\al)+5\dfrac{\p^4\pi^\perp g_1}{\p
b^4}(z_\al)\ga_1(\al)^4\vspace{0.3cm}\\
&+20\dfrac{\p\pi^\perp g_2}{\p
b}(z_\al)\ga_3(\al)+60\dfrac{\p^2\pi^\perp g_2}{\p
b^2}(z_\al)\ga_1(\al)\odot\ga_2(\al)\vspace{0.3cm}\\
&+20\dfrac{\p^3\pi^\perp g_2}{\p
b^3}(z_\al)\ga_1(\al)^3+60\dfrac{\p\pi^\perp g_3}{\p
b}(z_\al)\ga_2(\al)\vspace{0.3cm}\\ &+60\dfrac{\p^2\pi^\perp g_3}{\p
b^2}(z_\al)\ga_1(\al)^2+120\dfrac{\p\pi^\perp g_4}{\p
b}(z_\al)\ga_1(\al)\Bigg)
\end{align*}

The averaged functions, as
stated in Theorem \ref{PSt1}, are computed as follows:
\[
g_i(z)=Y(T,z)^{-1}\dfrac{y_i(T,z)}{i!}.
\]
So, from the recurrence \eqref{smoothyi}, we explicitly develop the expressions of $y_i$, for $i=0,1,\dots,5$.
\begin{align*}
y_0(t,z)=&x(t,z,0)-z, \vspace{0.3cm}\\
y_1(t,z)=&Y(t,z)\int_0^t Y(\tau,z)^{-1}F_1(\tau,x(\tau,z,0))\mathrm{d}\tau,\vspace{0.3cm}\\
y_2(t,z)=&Y(t,z)\int_0^t
Y(\tau,z)^{-1}\Bigg[2F_2(\tau,x(\tau,z,0))+2\dfrac{\p F_1}{\p
x}(\tau,x(\tau,x,0))y_1(\tau,z) \vspace{0.3cm}\\
&+\dfrac{\p^2 F_0}{\p x^2}(\tau,x(\tau,z,0))y_1(\tau,z)^2 \Bigg]\mathrm{d}\tau,\vspace{0.3cm}\\
y_3(t,z)=&Y(t,z)\int_0^t
Y(\tau,z)^{-1}\Bigg[6F_3(\tau,x(\tau,z,0))+6\dfrac{\p F_2}{\p
x}(\tau,x(\tau,x,0))y_1(\tau,z) \vspace{0.3cm}\\
&+3\dfrac{\p^2 F_1}{\p
x^2}(\tau,x(\tau,z,0))y_1(\tau,z)^2+3\dfrac{\p F_1}{\p
x}(\tau,x(\tau,z,0))y_2(\tau,z)\vspace{0.3cm}\\
&+3\dfrac{\p^2 F_0}{\p x^2}(\tau,x(\tau,z,0))y_1(\tau,z)\odot
y_2(\tau,z)+\dfrac{\p^3 F_0}{\p
x^3}(\tau,x(\tau,z,0))y_1(\tau,z)^3\Bigg]\mathrm{d}\tau,\vspace{0.3cm}\\
y_4(t,z)=&Y(t,z)\int_0^t
Y(\tau,z)^{-1}\Bigg[24F_4(\tau,x(\tau,z,0))+24\dfrac{\p F_3}{\p
x}(\tau,x(\tau,x,0))y_1(\tau,z) \vspace{0.3cm}\\
&+12\dfrac{\p^2 F_2}{\p
x^2}(\tau,x(\tau,z,0))y_1(\tau,z)^2+12\dfrac{\p F_2}{\p
x}(\tau,x(\tau,z,0))y_2(\tau,z)\vspace{0.3cm}\\
&+12\dfrac{\p^2 F_1}{\p x^2}(\tau,x(\tau,z,0))y_1(\tau,z)\odot
y_2(\tau,z)+4\dfrac{\p^3 F_1}{\p
x^3}(\tau,x(\tau,z,0))y_1(\tau,z)^3\vspace{0.3cm}\\
&+4\dfrac{\p F_1}{\p x}(\tau,x(\tau,z,0))y_3(\tau,z)+3\dfrac{\p^2
F_0}{\p x^2}(\tau,x(\tau,z,0))y_2(\tau,z)^2 \vspace{0.3cm}\\
&+4\dfrac{\p^2 F_0}{\p x^2}(\tau,x(\tau,z,0))y_1(\tau,z)\odot y_3(\tau,z)\vspace{0.3cm}\\
&+6\dfrac{\p^3 F_0}{\p x^3}(\tau,x(\tau,z,0))y_1(\tau,z)^2\odot
y_2(\tau,z)+\dfrac{\p^4 F_0}{\p
x^4}(\tau,x(\tau,z,0))y_1(\tau,z)^4\Bigg]\mathrm{d}\tau\vspace{0.3cm}\\
y_5(t,z)=&Y(t,z)\int_0^t
Y(\tau,z)^{-1}\Bigg[120F_5(\tau,x(\tau,z,0))+120\dfrac{\p F_4}{\p
x}(\tau,x(\tau,x,0))y_1(\tau,z) \vspace{0.3cm}\\
&+60\dfrac{\p^2 F_3}{\p
x^2}(\tau,x(\tau,z,0))y_1(\tau,z)^2+60\dfrac{\p F_3}{\p
x}(\tau,x(\tau,z,0))y_2(\tau,z)\vspace{0.3cm}\\
&+60\dfrac{\p^2 F_2}{\p x^2}(\tau,x(\tau,z,0))y_1(\tau,z)\odot
y_2(\tau,z)+20\dfrac{\p^3 F_2}{\p
x^3}(\tau,x(\tau,z,0))y_1(\tau,z)^3\vspace{0.3cm}\\
&+20\dfrac{\p F_2}{\p x}(\tau,x(\tau,z,0))y_3(\tau,z)+20\dfrac{\p^2
F_1}{\p x^2}(\tau,x(\tau,z,0))y_1(\tau,z)\odot y_3(\tau,z)\vspace{0.3cm}\\
&+15\dfrac{\p^2 F_1}{\p
x^2}(\tau,x(\tau,z,0))y_2(\tau,z)^2+30\dfrac{\p^3 F_1}{\p
x^3}(\tau,x(\tau,z,0))y_1(\tau,z)^2\odot y_2(\tau,z) \vspace{0.3cm}\\
&+5\dfrac{\p^4 F_1}{\p
x^4}(\tau,x(\tau,z,0))y_1(\tau,z)^4+5\dfrac{\p F_1}{\p
x}(\tau,x(\tau,z,0))y_4(\tau,z)\vspace{0.3cm}\\
&+10\dfrac{\p^2 F_0}{\p x^2}(\tau,x(\tau,z,0))y_1(\tau,z)\odot y_3(\tau,z)\vspace{0.3cm}\\
&+5\dfrac{\p^2 F_0}{\p x^2}(\tau,x(\tau,z,0))y_1(\tau,z)\odot y_4(\tau,z)\vspace{0.3cm}\\
&+15\dfrac{\p^3 F_0}{\p x^3}(\tau,x(\tau,z,0))y_1(\tau,z)\odot y_2(\tau,z)^2\vspace{0.3cm}\\
&+10\dfrac{\p^3 F_0}{\p x^3}(\tau,x(\tau,z,0))y_1(\tau,z)^2\odot y_3(\tau,z)\vspace{0.3cm}\\
&+10\dfrac{\p^4 F_0}{\p x^4}(\tau,x(\tau,z,0))y_1(\tau,z)^3\odot
y_2(\tau,z)+\dfrac{\p^5 F_0}{\p
x^5}(\tau,x(\tau,z,0))y_1(\tau,z)^5\Bigg]\mathrm{d}\tau.
\end{align*}

\section*{Appendix B: Basic results on the Brouwer degree}

In this appendix, following the
Browder's paper \cite{B}, we present the existence and uniqueness result from
the degree theory in finite dimensional spaces.

\begin{theorem}\label{ApAt1}
Let $X=\R^n=Y$ for a given positive integer $n$. For bounded open
subsets $V$ of $X$, consider continuous mappings
$f:\cl(V)\rightarrow Y$, and points $y_0$ in $Y$ such that $y_0$
does not lie in $f(\partial V)$ (as usual $\partial V$ denotes the
boundary of $V$). Then to each such triple $(f,V,y_0)$, there
corresponds an integer $d(f,V,y_0)$ having the following three
properties.
\begin{itemize}
\item[(i)] If $d(f,V,y_0)\neq 0$, then $y_0\in f(V)$. If $f_0$
is the identity map of $X$ onto $Y$, then for every bounded open set
$V$ and $y_0\in V$, we have
\[
d\left(f_0\big|_V,V,y_0\right)=\pm 1.
\]

\item[(ii)] $($Additivity$)$ If $f:\cl(V)\rightarrow Y$ is
a continuous map with $V$ a bounded open set in $X$, and $V_1$ and
$V_2$ are a pair of disjoint open subsets of $V$ such that
\[
y_0\notin f(\cl(V)\backslash(V_1\cup V_2)),
\]
then,
\[
d\left(f_0,V,y_0\right)=d\left(f_0,V_1,y_0\right)+
d\left(f_0,V_1,y_0\right).
\]

\item[(iii)] $($Invariance under homotopy$)$ Let $V$ be a
bounded open set in $X$, and consider a continuous homotopy
$\{f_t:0\leq t\leq 1\}$ of maps of $\cl(V)$ in to $Y$. Let
$\{y_t:0\leq t\leq 1\}$ be a continuous curve in $Y$ such that
$y_t\notin f_t(\partial V)$ for any $t\in[0,1]$. Then $d(f_t,V,y_t)$
is constant in $t$ on $[0,1]$.
\end{itemize}
Moreover the degree function $d(f,V,y_0)$ is uniquely determined by the three
above conditions.
\end{theorem}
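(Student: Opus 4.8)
The plan is to prove existence by an analytic construction and uniqueness by an axiomatic reduction; the topological content of both is carried by Sard's Theorem together with homotopy invariance. For existence I would first define the degree on $C^1$ maps $f$ at a regular value $y_0$, i.e.\ with $\det Df(x)\neq 0$ for every $x\in f^{-1}(y_0)$. Since $y_0\notin f(\partial V)$ the set $f^{-1}(y_0)$ is compact in $V$, and the Inverse Function Theorem makes it finite, so I may set $d(f,V,y_0)=\sum_{x\in f^{-1}(y_0)}\mathrm{sign}(\det Df(x))$ as in \eqref{defbr}. The key is to show this integer is unchanged when $y_0$ moves inside one connected component of $\R^n\setminus f(\partial V)$; I would obtain this from the mollified integral $d(f,V,y_0)=\int_V\omega_\delta(f(x)-y_0)\det Df(x)\,dx$, checking that it is independent of the small radius $\delta$, equals the signed count at regular values, and varies continuously --- hence, being integer valued, locally constantly --- with $y_0$.

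Next I would lift the two restrictions. By Sard's Theorem regular values are dense, so for a singular $y_0\notin f(\partial V)$ I define the degree via any nearby regular value lying in the same component, which is consistent by the previous step. For a merely continuous $f$ I approximate it uniformly by $C^1$ maps $f_k$; compactness of $\partial V$ keeps $y_0$ off $f_k(\partial V)$ for large $k$, the integers $d(f_k,V,y_0)$ stabilize, and I set $d(f,V,y_0)$ to their common value, independence of the approximating sequence following by homotoping two approximations through maps that avoid $y_0$ on $\partial V$.

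The three axioms are then verified directly. Property (i) is the normalization $d(\mathrm{Id},V,y_0)=+1$ (a single preimage with positive Jacobian) together with the solution property, immediate since a nonzero signed count forces $f^{-1}(y_0)\neq\emptyset$; additivity (ii) is the locality of the sum over disjoint regions containing all preimages; and homotopy invariance (iii) follows because along any admissible homotopy $f_t$ the integrand above depends continuously on $t$, so the integer $d(f_t,V,y_t)$ is constant. For uniqueness I would assume a second integer-valued $d'$ satisfies (i)--(iii) and reduce an arbitrary admissible triple to the normalized one: at a regular value, additivity isolates each preimage $x_j$ in a small ball, a straight-line homotopy deforms $f$ there to its linearization $y_0+Df(x_j)(x-x_j)$ without meeting $y_0$ on the boundary, and a homotopy through nonsingular linear maps reduces $Df(x_j)$ to the identity or a single reflection according to $\mathrm{sign}(\det Df(x_j))$, so normalization pins the local contribution; summing and passing to singular values and continuous maps by the same approximation forces $d'=d$.

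The hardest part will be the well-definedness at the center of the existence proof --- the invariance of the signed count as $y_0$ ranges over a component of $\R^n\setminus f(\partial V)$, and the independence of the smooth approximation. This is precisely where Sard's Theorem and the integral/homotopy machinery are indispensable, and it is the step that encodes essentially all the geometry of the theorem; by contrast the axiom verification and the uniqueness reduction are comparatively formal once this invariance is in hand.
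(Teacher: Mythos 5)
The paper never proves this theorem, so there is no in-paper argument to compare yours against: Theorem \ref{ApAt1} is stated in Appendix~B purely as background and is attributed to Browder's survey \cite{B}; the authors only \emph{use} its properties (the solution property in the proof of Theorem \ref{LSt1}, and homotopy invariance in Lemma \ref{NNL}). Your outline is the standard analytic construction of the Brouwer degree --- signed count of preimages for $C^1$ maps at regular values, the mollified (Heinz-type) integral representation to get local constancy of the count on components of $\R^n\setminus f(\partial V)$, Sard's theorem to handle singular values, uniform $C^1$ approximation to handle merely continuous $f$, and then the classical axiomatic reduction (\`a la F\"uhrer/Amann--Weiss) for uniqueness --- and as a program it is sound and fills a gap the paper deliberately leaves to the literature.

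Two caveats on the uniqueness half, which is the part you treat most briskly. First, normalization alone does not ``pin the local contribution'' in the orientation-reversing case: the axioms hand you the degree of the identity, but the fact that a linear reflection has degree $-1$ must itself be extracted from additivity plus homotopy invariance (the classical device is a map with exactly two nondegenerate preimages of opposite orientation that is admissibly homotopic to a constant, hence has degree $0$, forcing the two local contributions to cancel); without this step your reduction to linear maps does not close. Second, the statement as transcribed in this paper normalizes the identity to $\pm 1$ rather than $+1$; read literally, uniqueness then fails, since if $d$ satisfies properties (i)--(iii) so does $-d$. Your argument implicitly --- and correctly --- uses the actual normalization $d\left(f_0\big|_V,V,y_0\right)=+1$ from Browder's paper, and it is worth saying so explicitly, since otherwise the final sentence of the theorem is not provable as stated.
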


\section*{Acknowledgements}

The first author is partially supported by CNPq 248501/2013-5. The
second author is partially supported by a FEDER-MINECO grant
MTM2016-77278-P, a MINECO grant MTM2013-40998-P, and an AGAUR grant
number 2014SGR-568. The third author is supported by a FAPESP grant 2016/11471-2.

\end{document}